\documentclass[11pt]{amsart}
\usepackage[T1]{fontenc}
\usepackage{lmodern}
\usepackage[margin=1in]{geometry}
\usepackage{amsmath,amssymb,amsthm,mathtools}
\usepackage{enumitem,microtype}
\usepackage[hidelinks]{hyperref}
\usepackage{url}
\setlist[enumerate]{itemsep=2pt,topsep=5pt}
\newtheorem{theorem}{Theorem}[section]
\newtheorem{lemma}[theorem]{Lemma}
\newtheorem{proposition}[theorem]{Proposition}

\newtheorem{external}[theorem]{Proposition}
\theoremstyle{definition}
\newtheorem{definition}[theorem]{Definition}

\newcommand{\N}{\mathbb N}
\newcommand{\Z}{\mathbb Z}
\newcommand{\Cantor}{2^{\N}}
\newcommand{\Fhat}{\widehat F_{\omega}}
\newcommand{\ES}{E_{S_\infty}}
\newcommand{\onto}{\twoheadrightarrow}
\newcommand{\doi}[1]{\href{https://doi.org/#1}{\nolinkurl{doi:#1}}}
\DeclareMathOperator{\Epi}{Epi}
\DeclareMathOperator{\Bal}{Bal}
\DeclareMathOperator{\Inn}{Inn}
\DeclareMathOperator{\id}{id}
\DeclareMathOperator{\Fact}{Fact}
\DeclareMathOperator{\Homeo}{Homeo}
\DeclareMathOperator{\Min}{Min}

\DeclareMathOperator{\pr}{pr}
\hypersetup{
  pdftitle={The Borel complexity of conjugacy for Cantor minimal systems},
  pdfauthor={Xinan Dai, Wenhao Deng, Yingdong Shi, Tailin Wu, Yuchen Yang}
}
\title[Conjugacy for Cantor minimal systems]{The Borel complexity of conjugacy\\for Cantor minimal systems}

\author{Xinan Dai}
\address{College of Future Information Technology,
Fudan University, Shanghai, China}
\curraddr{Department of Artificial Intelligence,\newline
School of Engineering, Westlake University, Hangzhou, China}
\email{xndai23@m.fudan.edu.cn}

\author{Wenhao Deng}
\address{University of Glasgow, Glasgow, United Kingdom}
\curraddr{Department of Artificial Intelligence,\newline
School of Engineering, Westlake University, Hangzhou, China}
\email{dengwenhao@westlake.edu.cn}

\author{Yingdong Shi}
\address{School of Information Science and Technology,
ShanghaiTech University, Shanghai, China}
\email{shiyd2023@shanghaitech.edu.cn}

\author{Tailin Wu}
\address{Department of Artificial Intelligence,\newline
School of Engineering, Westlake University, Hangzhou, China}
\email{wutailin@westlake.edu.cn}

\author{Yuchen Yang}
\address{Department of Artificial Intelligence,\newline
School of Engineering, Westlake University, Hangzhou, China}
\email{yangyuchen@westlake.edu.cn}

\date{\today}
\begin{document}
\begin{abstract}
We prove that conjugacy of minimal homeomorphisms of the Cantor space is Borel bireducible with isomorphism of countable graphs, answering the Cantor minimal case of a question of Foreman. We obtain the lower bound by encoding countably based profinite groups. Finite quotient homomorphisms are represented by factor maps between a common family of minimal subshifts. Amalgamation makes the resulting inverse limit independent, up to conjugacy, of the quotient presentation. Conversely, finite-stage factorization recovers the group from any conjugacy of these inverse limits.
\end{abstract}
\maketitle

\section{Introduction}

Let $\N=\{0,1,2,\ldots\}$ and let $C=2^{\N}$ be the Cantor space. A homeomorphism of $C$ is \emph{minimal} if every orbit is dense. Two homeomorphisms $T,S$ are \emph{conjugate}, written $T\cong S$, if $hT=Sh$ for some homeomorphism $h$. A minimal system has no nonempty proper closed invariant subset. It is therefore natural to ask whether this restriction makes conjugacy easier to classify. We determine its exact complexity under Borel reducibility.

Write $\Homeo(C)$ for the homeomorphism group, with the topology of uniform convergence of maps and inverses, and $\Min(C)$ for its $G_\delta$ subspace of minimal homeomorphisms. A \emph{standard Borel space} is a measurable space arising from a Polish topology, that is, a separable completely metrizable topology. For equivalence relations $E$ on $X$ and $F$ on $Y$, where $X,Y$ are standard Borel spaces, write $E\leq_B F$ if there is a Borel map $f:X\to Y$ such that
\begin{equation}\label{eq:borel-reduction}
 x\mathrel E x'\quad\Longleftrightarrow\quad f(x)\mathrel F f(x').
\end{equation}
Write $E\sim_B F$ when both reductions hold. A relation is \emph{smooth} if it is Borel reducible to equality on $\mathbb R$. Let $\ES$ denote the universal orbit equivalence relation for Borel actions of the permutation group $S_\infty$ of $\N$, with its topology of pointwise convergence. Thus every such orbit relation reduces to $\ES$. We call $E$ \emph{$\ES$-hard} if $\ES\leq_B E$, and \emph{$\ES$-complete} if $E\sim_B\ES$.

We recall the standard comparison with countable structures, together with the Cantor conjugacy theorem of Camerlo and Gao.
\begin{proposition}\label{prop:countable-universality}
Isomorphism of simple undirected graphs on $\N$ is Borel bireducible with $\ES$ \cite[pp.~492--493]{CamerloGao}. For every countable language, isomorphism of structures with domain $\N$ is Borel reducible to $\ES$ \cite[pp.~492--493]{CamerloGao}. Conjugacy on $\Homeo(C)$ is also Borel bireducible with $\ES$ \cite[Theorem~5(i), p.~511]{CamerloGao}.
\end{proposition}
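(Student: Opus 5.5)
This proposition collects known facts with citations. The plan is therefore to recall the standard arguments behind each of the three claims rather than to build anything new, and to take the core theorems from the literature.

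For the first two claims I would work in the Polish space $X_L$ of $L$-structures on $\N$. Here $L$ is a countable relational language; function symbols are replaced by their graphs. Each $n$-ary symbol contributes a closed subset of $2^{\N^n}$, so $X_L$ is homeomorphic to a closed subset of $\Cantor$. The group $S_\infty$ acts continuously on $X_L$ by the logic action, and isomorphism on $X_L$ is exactly the orbit equivalence relation of this action.

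\begin{itemize}[leftmargin=*]
\item \textbf{Structures reduce to $\ES$.} This is immediate from the definition of $\ES$ as the universal orbit relation for Borel $S_\infty$-actions.
\item \textbf{Graphs are complete.} I would invoke the Becker--Kechris theorem: every Borel $S_\infty$-space Borel reduces to the logic action on $X_L$ for a suitable countable $L$. Then I would compose with the classical coding of countable structures in any countable language into graphs, via Friedman--Stanley and Hodges-style interpretations. This coding is Borel and functorial on isomorphisms, so it gives $\ES\le_B$ graph isomorphism. The converse reduction is the previous bullet applied to the graph language.
\end{itemize}

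For conjugacy on $\Homeo(C)$, the upper bound uses Stone duality. A homeomorphism $T$ of $C$ corresponds to an automorphism of the countable atomless Boolean algebra $\mathrm{Clop}(C)$. To make this a structure with domain $\N$, I would fix a Borel enumeration of the clopen sets, for instance unions of basic cylinders coded canonically. Then $T\mapsto(\mathrm{Clop}(C),\cap,\cup,{}^c,T^{-1}[\cdot])$ is a Borel map into $X_L$ for a fixed finite language. Conjugacy of homeomorphisms corresponds exactly to isomorphism of these expanded Boolean algebras, so the previous paragraph gives $\le_B\ES$.

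The lower bound is the genuine content of Camerlo--Gao: one must Borel reduce graph isomorphism to conjugacy on $\Homeo(C)$. The expected route is to encode a countable graph $G$ on $\N$ into a homeomorphism of $C$ so that $G$ is recoverable up to isomorphism. One option builds a subshift-like system whose isolated periodic components carry markers for vertices, with edges coded by connecting orbits. Another uses the Boolean-algebra picture to realize suitable automorphisms. The main obstacle is ensuring that every conjugacy induces a graph isomorphism, which requires rigid topological invariants. Since the proposition is explicitly stated as quoted from \cite[Theorem~5(i), p.~511]{CamerloGao}, I would cite that theorem for this step rather than reprove it.
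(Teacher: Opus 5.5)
Your proposal is correct and consistent with the paper, which gives no argument beyond citing Camerlo--Gao. The pieces you reconstruct are the standard arguments behind those citations: the logic action for structures, Becker--Kechris combined with the Friedman--Stanley coding for completeness of graph isomorphism, and Stone duality for the upper bound on $\Homeo(C)$. You also rightly defer the lower bound for conjugacy to Camerlo--Gao's Theorem 5(i). One harmless slip: if function symbols are replaced by their graphs, totality makes $X_L$ a $G_\delta$ rather than a closed subset of $\Cantor$, which still leaves it Polish and changes nothing.
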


Restriction to $\Min(C)$ gives the upper bound $\ES$. The issue is whether minimality lowers this degree. Foreman asks precisely whether conjugacy of minimal Cantor homeomorphisms is Borel bireducible with the maximal $S_\infty$ orbit relation \cite[Open Problem~15, p.~78, author version]{Foreman}. We answer this question affirmatively.
\begin{theorem}\label{thm:main}
Conjugacy on $\Min(C)$ is Borel bireducible with $\ES$.
\end{theorem}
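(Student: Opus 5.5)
The upper bound is immediate from Proposition~\ref{prop:countable-universality}. Conjugacy on $\Min(C)$ is the restriction of conjugacy on $\Homeo(C)$ to the invariant $G_\delta$ (hence Borel) subset $\Min(C)$. The inclusion map is therefore a Borel reduction, so conjugacy on $\Min(C)$ reduces to $\ES$. All the work is in the lower bound $\ES\leq_B{\cong}\restriction\Min(C)$.

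For the lower bound I follow the route indicated in the abstract and factor through profinite groups. First I will use the known fact that isomorphism of countably based profinite groups is $\ES$-complete. Equivalently, I will work with isomorphism of countable inverse systems of finite groups up to the equivalence generated by passing to cofinal subsystems and isomorphism, and show that $\ES$ (say, graph isomorphism) reduces Borel-ly to it. If that is not available off the shelf, I would encode countable graphs into profinite completions of suitable residually finite groups, or use the Mekler-style coding of graphs into nilpotent groups, adapted to be profinite. I will take this as an input.

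The main construction then assigns to each finite group $G$ a minimal subshift $X_G$, and to each surjection $\pi:G\onto H$ a factor map $X_G\to X_H$. This must be functorial, so that compositions of surjections go to compositions of factor maps. One natural choice is to take a fixed minimal Toeplitz or substitution base $Y$ and form group-extension-like skew products over $Y$ with fiber $G$, realized as subshifts by recoding. In that case $\pi$ induces the obvious fiber factor map.

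Given a profinite group presented as $\varprojlim G_n$, I take the inverse limit $\varprojlim X_{G_n}$. An inverse limit of minimal systems is minimal, and it is a Cantor system. A Borel choice of coding into $\Min(C)$ is routine once the presentation is Borel in the input.

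Two things must be verified. The first is \emph{independence of presentation}: isomorphic profinite groups with different quotient presentations yield conjugate systems. For this, the amalgamation step is to build the subshifts uniformly, via a common family, so that a commuting ladder between two presentations gives commuting factor maps, and hence a conjugacy of the limits by a back-and-forth argument.

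The second is \emph{rigidity}: any conjugacy between $\varprojlim X_{G_n}$ and $\varprojlim X_{H_m}$ must recover an isomorphism of the profinite groups. Here I would use compactness and Curtis--Hedlund--Lyndon, which give that a conjugacy composed with projection to $X_{H_m}$ factors through some finite stage $X_{G_n}$. It therefore remains to show that every factor map $X_{G_n}\to X_{H_m}$ comes, up to a controlled ambiguity such as the deck or translation group, from a group homomorphism $G_n\onto H_m$. This forces the subshifts to be rigid: they should have trivial or controlled automorphism groups and factor maps, for example via a coalescence or minimal self-joining property of the base. Assembling the finite-stage homomorphisms gives compatible maps and hence an isomorphism of inverse limits.

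I expect this rigidity step to be the main obstacle. The family of subshifts has to be flexible enough that amalgamation gives presentation-independence, and at the same time rigid enough that every factor map is group-induced. I would attack it by choosing the base with minimal self-joinings, so that factor maps are classified by their fiber action, and then computing equivariant maps between the $G$-extensions directly.
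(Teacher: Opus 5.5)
Your upper bound is the paper's, and your outline of the lower bound has the right architecture: Kechris--Nies--Tent as the source, finite quotient maps realized as factor maps, inverse limits, and finite-stage factorization by compactness. The gap lies in the two steps that carry the proof, and the mechanism you propose for the first one cannot work.

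You ask for a strict functor $G\mapsto X_G$, $\pi\mapsto X(\pi)$ on finite groups and all surjections, realized by skew products $Y\times_{c_G}G$ over one base with the fibre maps induced by $\pi$. For $\pi$ to induce $(y,g)\mapsto(T^ky,b(y)\pi(g))$, you need $\pi\circ c_G$ cohomologous to $c_H$. Base powers do not help, since $c\circ T^k$ is cohomologous to $c$. Take $G=H=\Z/3\Z$, so $c_H=c_G=c$, and $\pi(x)=-x$. Then $\pi\circ c-c=-2c=c$ must be a coboundary, so $Y\times_cG$ is not minimal. Thus automorphisms of the finite quotients are not realized (for instance $y_i\mapsto y_i^2$ on $Q_n(D)$). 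Neither, with them, are the arbitrary surjections that an isomorphism of profinite groups induces between two presentations. So ``a commuting ladder gives commuting factor maps'' has nothing to run on.

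The paper never attaches a system to a group. It builds one countable family of word systems whose next-level words are indexed by labels in $\Bal(K,I)\times\Epi(F_m,G)\times G^K$. Every regular affine map (any epimorphism $q$, together with orbit maps and translations) therefore lifts and continues compatibly with composition (Lemmas~\ref{lem:cover} and~\ref{lem:continuation}). Presentation independence then comes not from functoriality but from two ingredients: amalgamation of witnessed finite diagrams, and a Borel choice of a chain with the extension property (Lemmas~\ref{lem:amalgamation} and~\ref{lem:fusion}). Together these yield a back-and-forth zigzag between the chains whenever $P_R\cong_{\mathrm{top}}P_S$. Nothing in your proposal plays this Fra\"iss\'e-type role.

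The rigidity step is left exactly as the tension you name: the family must realize all these maps and yet admit no other factor maps. Minimal self-joinings is a measure-theoretic property, and by the computation above, group extensions of such a base fail the flexibility half anyway. The paper makes the family rigid by construction. Deka's criteria (Propositions~\ref{ext:aligned} and~\ref{ext:tame}) are enforced by a probabilistic choice of the next words: unique readability, middle-third separation, and two-block exclusion. The label rigidity in Lemma~\ref{lem:cover} shows that any table whose blockwise extension carries $W_{u,n+1}$ into $W_{v,n+1}$ is regular affine. Together these give Theorem~\ref{thm:factors}.

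The residual ambiguity must then be removed exactly, which your sketch does not address. In the paper, the table determines its associated homomorphism with no inner-automorphism slack (Lemma~\ref{lem:affine}). Odometer offsets force $c_n+d_n=0=d_n+c_{n+1}$, so the integer shifts cancel and the finite-stage triangles become exact identities of group epimorphisms. If your ``deck or translation'' ambiguity changed the induced homomorphisms, those triangles would commute only up to that ambiguity. They would then not directly assemble into an isomorphism of the inverse limits.
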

Thus minimal Cantor homeomorphisms retain the full classification complexity of countable graphs. The substantial part of the theorem is a Borel construction that realizes this lower bound while preserving minimality.

Vejnar recently obtained the same Borel degree for conjugacy of Devaney
chaotic homeomorphisms of the Cantor space, namely, homeomorphisms with a
dense orbit and a dense set of periodic points \cite[Theorem~3.4]{Vejnar}.
Every minimal Cantor system is infinite and has no periodic points. The two
results therefore locate the same classification degree in disjoint
dynamical regimes.

\subsection{Earlier bounds and the remaining question}

The distinction between a system and a system with a chosen point is central to the history of the problem. A \emph{pointed system} $(C,T,x)$ records a point $x\in C$; a pointed conjugacy must carry it to the distinguished point of the target. Equality of countable sets of reals is represented on $\mathbb R^{\N}$ by
\begin{equation}\label{eq:countable-real-sets}
 (x_n)_n\mathrel{\Delta_{\mathbb R}^{+}}(y_n)_n
 \quad\Longleftrightarrow\quad \{x_n:n\in\N\}=\{y_n:n\in\N\}.
\end{equation}
Kaya obtains an exact classification of pointed systems and a lower bound for the unpointed relation.
\begin{proposition}\label{prop:kaya-background}
Pointed conjugacy on $\Min(C)\times C$ is Borel bireducible with $\Delta_{\mathbb R}^{+}$ \cite[Theorem~1.2, author version]{Kaya}. Moreover, $\Delta_{\mathbb R}^{+}$ is Borel reducible to conjugacy on $\Min(C)$ \cite[Theorem~1.1, author version]{Kaya}.
\end{proposition}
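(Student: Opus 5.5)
Since this proposition is quoted from Kaya, I only sketch how I would prove each part. The plan is to code a pointed system by the return-time sets of its distinguished point, and to build Toeplitz flows for the lower bounds.

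\emph{Upper bound for pointed conjugacy.} Fix an enumeration $(U_k)_{k\in\N}$ of the clopen subsets of $C$. To $(T,x)\in\Min(C)\times C$ assign the sequence $(N^{T,x}_{U_k})_k\in(2^{\Z})^{\N}$, where
\[
N^{T,x}_U=\{n\in\Z: T^nx\in U\}.
\]
This map is Borel, since each coordinate is a countable Boolean combination of evaluations at the maps $T^{n}$, which depend continuously on $T$. Composing with a Borel bijection $2^{\Z}\cong\mathbb R$ lands in $\mathbb R^{\N}$. The claim is that $(T,x)$ and $(S,y)$ are pointed conjugate exactly when the sets $\{N^{T,x}_{U}\}$ and $\{N^{S,y}_{V}\}$ are equal.

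\emph{Correctness of the coding.} For the forward direction, a pointed conjugacy $h$ gives $N^{S,y}_{h(U)}=N^{T,x}_U$, and $h$ permutes the clopen sets, so the two sets agree. For the converse, minimality makes the orbit of $x$ dense. Hence $U\mapsto N^{T,x}_U$ is an injective Boolean-algebra homomorphism from the clopen algebra into $\mathcal P(\Z)$, and it intertwines $T^{-1}$ with the shift. Its image $\mathcal B_{T,x}$ is exactly the set we recorded. If $\mathcal B_{T,x}=\mathcal B_{S,y}$, the composite of the two codings is a Boolean isomorphism of the clopen algebras commuting with the dynamics. By Stone duality it is induced by a homeomorphism $h$ with $hT=Sh$. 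The distinguished point $x$ corresponds to the ultrafilter $\{U: 0\in N_U\}$, which is mapped to the ultrafilter of $y$, so $h(x)=y$.

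\emph{Lower bound for pointed conjugacy.} Given $(r_n)_n\in\mathbb R^{\N}$, I would build, in a Borel way, a Toeplitz sequence $\omega$ whose orbit closure is minimal, and take $\omega$ itself as the distinguished point. The building blocks are a fixed sequence of periods $p_1\mid p_2\mid\cdots$ and blocks $B_n(r)$ coding the binary digits of $r$, chosen uniformly recognizable. At stage $n$ one fills the holes of period $p_n$ with blocks coding $r_m$ for every $m\le n$, in a pattern that is invariant under permuting indices and repetitions. The aim is that the family $\mathcal B_{\omega}$, and hence the pointed conjugacy class, depends only on $\{r_n\}$. The second goal is that $\{r_n\}$ can be decoded from $\omega$ as the set of reals appearing as codes at recognizable positions.

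\emph{Lower bound for unpointed conjugacy.} For the second assertion I would use the same construction but rigidify the point. I would arrange that $\omega$ is the unique point of its orbit closure with a prescribed recognizable "central" marker, for instance the unique two-sided sequence regular at every level. Then every conjugacy must carry distinguished point to distinguished point, and unpointed conjugacy of the flows reduces to pointed conjugacy.

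\emph{Main obstacle.} I expect the hard step to be the Toeplitz construction. It must be invariant under re-enumeration of $(r_n)$, which forces a symmetric amalgamation of all the codes appearing at each stage. At the same time it must keep the codes of the $r_n$ recognizable, and the distinguished point canonical, inside every conjugate copy. I would try first to use fixed-length blocks separated by rigid synchronizing words, so that recognizability follows from the usual Toeplitz skeleton arguments.
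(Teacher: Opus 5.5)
The paper gives no proof of this proposition; both parts are quoted from Kaya and used only as background.

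Your upper bound is complete and correct. The coding $(T,x)\mapsto(N^{T,x}_{U_k})_k$ is continuous. Density of the orbit makes $U\mapsto N^{T,x}_U$ an injective Boolean homomorphism with $N^{T,x}_{TU}=N^{T,x}_U+1$. Stone duality then turns equality of the images into a conjugacy carrying $x$ to $y$.

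The gap is in the two lower bounds, which are stated as aims rather than proved. In the pointed case, stage $n$ of your recipe uses only $r_0,\dots,r_n$. Two enumerations of the same countable set generally have different finite initial segments. So making each stage symmetric in $\{r_0,\dots,r_n\}$ still produces different skeleta, and nothing in the proposal supplies a sliding block code carrying one $\omega$ to the other.

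Invariance under re-enumeration needs an actual mechanism. One option: take Toeplitz points $\omega_r$ depending Borelly on $r$, and use the orbit closure of $(\omega_{r_n})_n$ in $\prod_nX_{\omega_{r_n}}$. That point is uniformly recurrent because every finite subtuple is Toeplitz, and coordinate maps absorb repetitions and permutations. Another option is a back-and-forth argument like Lemma~\ref{lem:fusion}.

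Recovery is also unaddressed. The reals ``appearing as codes at recognizable positions'' are not a pointed-conjugacy invariant, because a pointed conjugacy is an arbitrary sliding block code. You need an intrinsic criterion, for example: $(X_{\omega_r},\omega_r)$ is a pointed factor of the constructed pointed system if and only if $r\in\{r_n\}$. That in turn requires an independence property of the family $(\omega_r)$, which you have not formulated.

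In the unpointed case the proposed rigidification fails. If ``regular at every level'' means being a Toeplitz point, such points are exactly those in singleton fibres of the maximal equicontinuous factor map onto the odometer $O$. They form a dense $G_\delta$ set consisting of uncountably many orbits, so the property selects no orbit.

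If instead it means the point lying over $0\in O$, that is not intrinsic. The factor map onto $O$ is canonical only up to rotation, and a conjugacy $h$ induces $z\mapsto z+\tau$ with $\tau\in O$ that in general need not lie in $\Z$. Then $h(\omega)$ need not lie in the orbit of $\omega'$, and you cannot compose with a power of the shift to obtain a pointed conjugacy.

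You need one of two things. Either a conjugacy-invariant description of the distinguished orbit that also forces $\tau\in\Z$ (for instance, arranging that exactly one orbit of $O$ carries non-singleton fibres and controlling the points in those fibres). Or an unpointed invariant from which $\{r_n\}$ can be read off directly. Forcing integer offsets is precisely what Proposition~\ref{ext:tame} does for this paper's systems in Theorem~\ref{thm:factors}.
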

The pointed relation is therefore Borel. Whether the unpointed relation is Borel was explicitly asked by Gao in \cite[Section~1.1.2, Question~1.6, p.~3]{BuzziEtAl}. A subset of a standard Borel space is \emph{analytic} if it is the Borel image of a Polish space. An analytic set $A\subseteq X$ is \emph{complete analytic} if, for every analytic $B\subseteq Y$, where $Y$ is a standard Borel space, there is a Borel map $f:Y\to X$ such that $B=f^{-1}(A)$. In joint work presented in Deka's thesis, Deka, Garc\'ia-Ramos, Kasprzak, Kunde and Kwietniak prove the following.
\begin{proposition}\label{prop:deka-background}
Conjugacy is a complete analytic subset of $\Min(C)^2$ \cite[Corollary~5.7.2, p.~67]{Deka}.
\end{proposition}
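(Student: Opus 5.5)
Analyticity is the routine half. The relation is
\[
\{(T,S)\in\Min(C)^2:\exists h\in\Homeo(C)\ (hT=Sh)\}.
\]
It is the projection to $\Min(C)^2$ of the closed subset $\{(T,S,h): hT=Sh\}$ of the Polish space $\Min(C)^2\times\Homeo(C)$. Composition is continuous in the uniform topology, so this set is closed, and its projection is analytic.

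For completeness I will pull back a set already known to be complete analytic. I take graph isomorphism on $\N$, viewed as a subset of $(2^{\N\times\N})^2$. Its completeness is classical. Given an analytic $B\subseteq Y$, choose a Borel map $y\mapsto t_y$ into trees on $\N$ such that $y\in B$ exactly when $t_y$ is ill-founded. Let $t'_y$ be $t_y$ with an infinite branch grafted on, and take the canonical countable coding of trees into graphs. Then $t_y\cong t'_y$ holds exactly when $y\in B$; the delicate direction is a back-and-forth argument on ranks of the well-founded parts.

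Now suppose $f$ is any Borel reduction of graph isomorphism to conjugacy on $\Min(C)$. Then $(f\times f)^{-1}(\text{conjugacy})$ is exactly graph isomorphism. Composing with the map $y\mapsto(\text{code of }t_y,\text{code of }t'_y)$ gives a Borel map $g:Y\to\Min(C)^2$ with $B=g^{-1}(\text{conjugacy})$. This proves completeness. In fact, any $\ES$-hard relation that is analytic as a set is complete analytic, since $\ES$ is bireducible with graph isomorphism by Proposition~\ref{prop:countable-universality}.

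The whole difficulty is therefore the existence of $f$, which is precisely the lower bound of Theorem~\ref{thm:main}. Kaya's reduction in Proposition~\ref{prop:kaya-background} does not suffice, because $\Delta_{\mathbb R}^{+}$ is Borel as a set. If one wants the proposition independently of the main construction, I would attempt a direct Borel map $t\mapsto T_t$ from trees to minimal subshifts, or to inverse limits of such subshifts. It should be built by hierarchical Toeplitz-type coding, so that $T_t$ and $T_{t'}$ are conjugate exactly when $t\cong t'$. This is the step I expect to be the main obstacle. The block coding must preserve minimality, and one must also show that every conjugacy can be recovered combinatorially at finite stages, for instance by Curtis--Hedlund--Lyndon sliding block codes, so that it yields a tree isomorphism. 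The paper's profinite-group encoding, with factor maps among a common family of minimal subshifts, is the natural route to that recovery.
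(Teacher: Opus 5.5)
The paper does not prove this statement. It quotes it from Deka's thesis as background and never derives it from Theorem~\ref{thm:main}. Your derivation is a legitimate alternative. Analyticity by projecting the closed set $\{(T,S,h):hT=Sh\}$ is fine. Pulling a complete analytic set of pairs back through $f\times f$, for a Borel reduction $f$ of graph isomorphism to conjugacy, does give completeness.

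The argument is also not circular. The proof of Theorem~\ref{thm:main} uses only Deka's Lemmas~5.2.8 and~5.2.11 (Propositions~\ref{ext:aligned} and~\ref{ext:tame}), never the corollary quoted here. What your route buys is the general principle that every $\ES$-hard relation which is analytic as a set is complete analytic, so the proposition becomes a formal consequence of the main theorem. What it costs is dependence on the whole construction of Sections~\ref{sec:finite}--\ref{sec:recovery}, whereas the cited result does not depend on this paper.

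Two of your supporting remarks are wrong, however.

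First, your sketch of the classical input fails: grafting an infinite branch onto an ill-founded tree does not in general produce an isomorphic tree. Take $t_y=\{0^n:n\in\N\}$, a single branch. Grafting a second branch at the root yields a two-way infinite path, and grafting it at any other node creates a vertex of degree $3$. Either way $t_y\not\cong t'_y$ although $t_y$ is ill-founded. No back-and-forth on well-founded parts can repair this, because it is the ill-founded parts that differ. The fact that graph isomorphism is complete analytic as a set of pairs is true. But its proof needs padding that forces both sides into a common canonical type in the ill-founded case. For instance, one relativizes Harrison's analysis so that both sides become linear orders of type $\omega_1^{y}(1+\eta)$, with $\eta$ the order type of the rationals. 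You should cite the fact rather than derive it this way.

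Second, your suggested independent route via a Borel map $t\mapsto T_t$ into minimal subshifts, with $T_t\cong T_{t'}$ exactly when $t\cong t'$, cannot work. By Proposition~\ref{prop:symbolic-background}, conjugacy of subshifts is a countable Borel equivalence relation, hence a Borel set of pairs. Its pullback under $(t,t')\mapsto(T_t,T_{t'})$ would then make tree isomorphism Borel. Any direct proof must use non-expansive systems, such as the inverse limits of Section~\ref{sec:amalgamation}.
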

This answers Gao's question negatively and determines the descriptive complexity of the set of conjugate pairs. Theorem~\ref{thm:main} determines the finer classification invariant given by the Borel reducibility degree of the equivalence relation. In particular, the passage from pointed to unpointed systems raises the degree from $\Delta_{\mathbb R}^{+}$ to $\ES$.

Two nearby classifications help locate this conclusion. Systems are \emph{orbit equivalent} if a homeomorphism carries the orbits of one system onto those of the other. An \emph{invertible compact metrizable system} is a compact metrizable space equipped with a homeomorphism; the pointed version also specifies a point. For these systems we use the standard Borel coding described in \cite[Section~2, Fact~2.1, author version]{LiPeng}.
\begin{proposition}\label{prop:nearby-background}
Orbit equivalence on $\Min(C)$ is Borel bireducible with $\ES$ \cite[Theorem~5.9, author version]{Melleray}. Pointed conjugacy of invertible minimal compact metrizable systems is not Borel reducible to $\ES$ \cite[Theorem~1.1, author version]{LiPeng}.
\end{proposition}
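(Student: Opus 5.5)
This proposition collects two results from the literature, so within the paper it is justified by the citations given. If I had to reprove it, I would treat the two assertions separately.

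\emph{Orbit equivalence.} For the upper bound I would invoke the Giordano--Putnam--Skau classification. Two minimal Cantor systems are orbit equivalent if and only if their quotient dimension groups $K^0(C,T)/\Inn$ are isomorphic as ordered groups with distinguished order unit, where $\Inn$ denotes the subgroup of infinitesimals. This invariant is a countable structure, and it can be computed in a Borel way from $T$. To see this, fix the countable Boolean algebra of clopen sets of $C$. From $T$ one reads off Borel the coinvariants $C(C,\Z)/(f-f\circ T^{-1})$, their positive cone, the infinitesimal subgroup and the unit $[1_C]$, and then codes the result as a structure with domain $\N$. The second sentence of Proposition~\ref{prop:countable-universality} then gives a reduction to $\ES$.

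For the lower bound I would proceed in two steps.
\begin{enumerate}
\item Show that isomorphism of simple dimension groups with order unit and trivial infinitesimal subgroup is $\ES$-hard. One route is to encode graphs into the state spaces of such groups, that is, into Choquet simplices with a countable dense set of specified data. This is the route taken in Melleray's paper.
\item Invert the invariant in a Borel way. By Herman--Putnam--Skau, every simple dimension group arises from a simple ordered Bratteli diagram, and the Vershik map on such a diagram is a minimal Cantor system realizing the group. To make this Borel, I would choose the telescoped diagram canonically from an enumeration of the group, and fix the homeomorphism of the path space with $C$ uniformly.
\end{enumerate}
The hard step is step~1: finding a class of simple dimension groups whose isomorphism relation is already $\ES$-complete.

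\emph{Pointed conjugacy of compact systems.} To show non-reducibility to $\ES$, I would follow Hjorth's turbulence theory. It suffices to find a turbulent continuous action of a Polish group on a Polish space, and a Borel map into pointed invertible minimal compact metrizable systems that reduces the orbit relation to pointed conjugacy. Hjorth's theorem then shows that no generically ergodic turbulent orbit relation reduces to $\ES$.

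The natural source is a one-parameter family of minimal systems, for instance minimal rotations or skew products on compact groups or tori. For these, pointed conjugacy amounts to a cocycle or coboundary relation, and coboundary relations of this kind are classically turbulent; the standard examples are $\ell^1$-type or $c_0$-type relations on $\mathbb R^{\N}$.

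The main obstacle is the reverse implication of the reduction: showing that a pointed conjugacy between two systems in the family forces the parameters to be orbit-related. Compactness and equicontinuity arguments should handle this. The distinguished point removes the automorphism ambiguity, and that is exactly what allows a Borel, non-classifiable, non-$S_\infty$ invariant to survive.
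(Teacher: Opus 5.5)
Relying on the citations is exactly what the paper does: it gives no argument beyond \cite[Theorem~5.9]{Melleray} and \cite[Theorem~1.1]{LiPeng}. Your Giordano--Putnam--Skau upper bound for orbit equivalence is sound. (Write $\mathrm{Inf}$ for the infinitesimals, since in this paper $\Inn$ means inner automorphisms.) As a self-contained reproof, however, the rest has gaps.

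For the orbit-equivalence lower bound you leave Step~1 as a black box, and it is easier than you suggest. Isomorphism of countable Boolean algebras is $\ES$-complete \cite{CamerloGao}. Send $B$ to $G_B=C(\mathrm{St}(B),\mathbb Q)$, the locally constant rational functions on the Stone space, with the strictly positive cone and unit $1$. Each $G_B$ is a countable simple dimension group with trivial infinitesimals. Its state space is the Bauer simplex of probability measures on $\mathrm{St}(B)$. Hence an isomorphism of ordered groups with unit induces a homeomorphism $\mathrm{St}(B)\cong\mathrm{St}(B')$ of extreme boundaries, and the converse is clear. What genuinely remains is your Step~2, a Borel version of the Herman--Putnam--Skau realization, which you only gesture at.

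For the pointed statement, the turbulence strategy is reasonable, but the family and the decisive step are missing. Minimal rotations cannot work. With base points at the identities, a pointed conjugacy of minimal rotations of compact monothetic groups is a topological group isomorphism. Pointed conjugacy there is therefore equality of the countable eigenvalue groups in the circle, a relation of the form $\Delta_{\mathbb R}^{+}$, which is classifiable by countable structures.

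For skew products $T_\varphi(x,y)=(x+\alpha,y+\varphi(x))$ on $\mathbb T^2$ over the rotation $R_\alpha$, the step you defer is the whole content. Equicontinuity is the wrong tool, since these systems must be non-equicontinuous. You need an invariant comeager set of cocycles with two properties:
\begin{enumerate}
\item the projection to $R_\alpha$ is the maximal equicontinuous factor, so a pointed conjugacy lies over the identity;
\item $T_\varphi$ is uniquely ergodic, so the fibre maps preserve Lebesgue measure and are rotations or reflections.
\end{enumerate}
Together these give pointed conjugacy exactly when $\psi\in\pm\varphi+\{f\circ R_\alpha-f\}$. You must then prove that the coboundary action $f\cdot\varphi=\varphi+f\circ R_\alpha-f$ of $C(\mathbb T,\mathbb T)$ is turbulent on that set; the sign only merges pairs of orbits. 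Pointing to the $\ell^1$ or $c_0$ relations supplies no reduction, so Hjorth's theorem cannot yet be applied.
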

Consequently, conjugacy and orbit equivalence have the same Borel degree on Cantor minimal systems, although they preserve different dynamical information. The result of Li and Peng also shows the importance of the underlying space: for general compact minimal systems, even pointed conjugacy exceeds classification by countable structures.

\subsection{Symbolic systems and inverse limits}

The symbolic case supplies both an earlier line of investigation and the finite stages of our construction. For a finite discrete alphabet $A$, the left shift on $A^{\Z}$ is given by $(\sigma x)(i)=x(i+1)$. A \emph{subshift} is a nonempty closed subset invariant under $\sigma$ and $\sigma^{-1}$. The Hausdorff metric on these closed subsets supplies the Borel structure; we take a countable disjoint union over alphabets $A=\{0,\ldots,k-1\}$, $k\geq2$. A Borel equivalence relation is \emph{countable} if all its classes are countable, and \emph{universal countable} if every countable Borel equivalence relation reduces to it. Let $E_0$ denote eventual equality on $2^{\N}$:
\begin{equation}\label{eq:eventual-equality}
 x\mathrel{E_0}y\quad\Longleftrightarrow\quad
 (\exists n)(\forall m\geq n)\ x(m)=y(m).
\end{equation}
The following results show how the symbolic problem changes when minimality is imposed.
\begin{proposition}\label{prop:symbolic-background}
Conjugacy of subshifts over finite alphabets is a universal countable Borel equivalence relation \cite[p.~113]{Clemens}. On the subspace of infinite minimal binary subshifts, conjugacy is a countable Borel equivalence relation \cite[p.~10, author version]{GJS}, and $E_0$ is Borel reducible to it \cite[Theorem~1.5.3, p.~10, author version]{GJS}.
\end{proposition}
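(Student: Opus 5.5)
The plan treats the three assertions separately: the first two are structural and come from the Curtis--Hedlund--Lyndon theorem, while the third needs an explicit construction.

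\emph{Conjugacy is a countable Borel equivalence relation.} By the Curtis--Hedlund--Lyndon theorem, every conjugacy $\phi:X\to Y$ between subshifts is a sliding block code $\phi_{\theta}$. Such a code is determined by a window radius $r$ and a local rule $\theta:A^{2r+1}\to B$. Only countably many pairs $(r,\theta)$ exist over the alphabets $\{0,\ldots,k-1\}$. For a fixed pair, the map $X\mapsto \phi_\theta(X)$ on the hyperspace is Borel, in fact continuous on each piece. The condition ``$\phi_\theta$ is injective on $X$, with inverse the block code $\phi_{\theta'}$'' is Borel in $X$, since it can be checked on the countably many finite words occurring in $X$. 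Hence the conjugacy relation is the union over pairs $(\theta,\theta')$ of graphs of Borel partial functions. This makes it Borel with countable classes. Restricting to the Borel subspace of infinite minimal binary subshifts preserves countability, which gives the middle assertion.

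\emph{Universality (Clemens).} Here I would first fix a universal countable Borel equivalence relation. The shift action of $F_2$ on $(2^{\N})^{F_2}$ is a natural choice, or equivalently $E_\infty$ realized as conjugacy of some concrete family. I would then encode each point $x$ as a subshift $X_x$ so that sliding block codes between the coded shifts correspond exactly to group-element translations of $x$. Concretely, $X_x$ would be built from marker words rigid enough that any conjugacy must respect the markers. This rigidity step is the real work. I would rely on Clemens' argument instead of redoing it.

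\emph{$E_0\leq_B$ conjugacy on minimal subshifts.} I would use an $S$-adic construction. Given $x\in 2^{\N}$, choose primitive, recognizable substitutions $\tau_0,\tau_1$ on a binary alphabet, and let $X_x$ be the subshift generated by the directive sequence $(\tau_{x(n)})_n$. Primitivity of all products gives minimality, and infiniteness follows from aperiodicity.

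The forward direction is that if $x\mathrel{E_0}y$, then $X_x\cong X_y$. This should follow because changing finitely many initial substitutions yields a system conjugate to the original by recognizability, via a Kakutani--Rokhlin tower recoding.

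For the converse, I would extract a conjugacy invariant that recovers the tail of $x$. The candidate is the ordered dimension group with unit: it is the direct limit of the incidence matrices $M_{x(n)}$, and conjugacy preserves it. Choosing $M_0,M_1$ suitably, for instance so that the cones they generate nest in a way that reads off the tail, makes isomorphism of these limits imply $x\mathrel{E_0}y$.

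I expect this converse rigidity to be the main obstacle. It is where the argument of Gao--Jackson--Seward has to be invoked or reconstructed. If the dimension-group route proves too coarse, the fallback is a Toeplitz family whose skeleton structure encodes $x$ digit by digit, since conjugacies of Toeplitz shifts preserve the skeleton up to finite shifts.
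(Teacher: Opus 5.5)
The paper does not prove this proposition: all three assertions are quoted from Clemens and from Gao--Jackson--Seward. Your Curtis--Hedlund--Lyndon argument for the two countability claims is correct. Each pair of local rules $(\theta,\theta')$ contributes the graph of a continuous map on a closed set of subshifts. Deferring universality to Clemens is also what the paper does.

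The gap is in your sketch of $E_0\le_B$ conjugacy: its forward direction is false as stated. Suppose $x(m)=y(m)$ for $m\geq N$. Recognizability only presents $X_x$ and $X_y$ as Kakutani--Rokhlin towers over the same base $X^{(N)}$, the subshift of the common tail. The towers are built via $\lambda_x=\tau_{x(0)}\circ\cdots\circ\tau_{x(N-1)}$ and $\lambda_y=\tau_{y(0)}\circ\cdots\circ\tau_{y(N-1)}$. The recoding $\sigma^k\lambda_x(z)\mapsto\sigma^k\lambda_y(z)$ commutes with the shift only if $|\lambda_x(a)|=|\lambda_y(a)|$ for every letter $a$. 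Towers of different heights over one base are generally not conjugate. For instance, let $\tau_0$ be Thue--Morse and let $\tau_1$ be primitive, aperiodic, of constant length $3$ and height one (e.g.\ $a\mapsto aab$, $b\mapsto bba$). Take $x=0111\cdots$ and $y=111\cdots$. Then $X_x$ is a two-step tower over the clopen set $\tau_0(X_y)$, so it has the continuous eigenvalue $-1$. The maximal equicontinuous factor of $X_y$ is the $3$-adic odometer, which lacks that eigenvalue. Thus $X_x\not\cong X_y$ although $x\mathrel{E_0}y$.

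A better choice of $\tau_0,\tau_1$ does not obviously repair this, because the forward direction collides with your converse. Conjugacy preserves the unital dimension group $(K^0,K^0_+,[1])$. Here the order unit is the class of the height vector $(|\lambda_x(a)|)_a$ in the common tail group. So $E_0$-equivalent parameters give conjugate systems only if every change of prefix is absorbed by an order automorphism of the tail group carrying one unit to the other. At the same time, this invariant must separate distinct $E_0$-classes. The proposal arranges neither. The converse, which is the actual content of the cited Theorem~1.5.3, is left to be "invoked or reconstructed."

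Your Toeplitz fallback overstates in the same way. A conjugacy of Toeplitz subshifts induces a rotation of the odometer factor that need not be by an integer. This is why the paper needs the separation hypotheses of Proposition~\ref{ext:tame} to force integer offsets in its own construction.

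You can either cite Gao--Jackson--Seward, as the paper does, or use what you have already proved. Conjugacy on infinite minimal binary subshifts is a Borel equivalence relation. By the Harrington--Kechris--Louveau dichotomy, $E_0$ therefore reduces to it as soon as it is not smooth. Non-smoothness follows from Thomas's theorem on binary Toeplitz subshifts, quoted in Proposition~\ref{prop:toeplitz-background}: a Borel reduction to equality on all infinite minimal binary subshifts would restrict to one on the Toeplitz subfamily.
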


Toeplitz subshifts provide a useful refinement of this picture. A sequence $x\in A^{\Z}$ is \emph{Toeplitz} if every coordinate is periodic: for each $i\in\Z$ there is $p\geq1$ such that $x(i+kp)=x(i)$ for all $k\in\Z$. Its shift orbit closure is a \emph{Toeplitz subshift}. Every finite block in $x$ then recurs with bounded gaps, so this orbit closure is minimal. We consider infinite Toeplitz subshifts. We use the periodic-part definition of \emph{separated holes} from \cite[Sections~2.4--2.5, author version]{KayaToeplitz}. For each integer $p\geq1$, put
\begin{equation}\label{eq:toeplitz-holes}
 H_p(x)=\{i\in\Z:(\exists k\in\Z)\ x(i+kp)\ne x(i)\},
 \qquad
 \delta_p(x)=\inf\{|i-j|:i,j\in H_p(x),\ i\ne j\},
\end{equation}
with $\inf\varnothing=\infty$. The \emph{$p$-skeleton} of $x$ is obtained by replacing its coordinates in $H_p(x)$ by a fixed blank symbol outside $A$. A positive integer $p$ is an \emph{essential period} if its $p$-skeleton contains a nonblank symbol and has least positive period $p$. Choose essential periods $p_n$ with $p_n\mid p_{n+1}$ such that every essential period divides some $p_n$. The subshift has separated holes if $\delta_{p_n}(x)\to\infty$. This condition is independent of the chosen period sequence and Toeplitz generator. A Borel equivalence relation is \emph{hyperfinite} if it is an increasing union of Borel equivalence relations with finite classes.
\begin{proposition}\label{prop:toeplitz-background}
Conjugacy of infinite binary Toeplitz subshifts is not smooth \cite[Theorem~4.2, p.~11, author version]{Thomas}. On the subspace with separated holes, Sabok and Tsankov prove that, for every Borel probability measure $\mu$ on this subspace, the relation is hyperfinite on a Borel subset of full $\mu$-measure \cite[Theorem~1.1, p.~586]{SabokTsankov}. Kaya strengthens this to hyperfiniteness on the whole subspace \cite[Theorem~1, p.~2, author version]{KayaToeplitz}.
\end{proposition}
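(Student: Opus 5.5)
The proposition collects three published results, so the proof is a matter of checking compatibility of definitions rather than new argument. I will treat each assertion in turn and check that the objects in the cited source are the same as those fixed above. That means the same Borel structure on the space of subshifts (Hausdorff metric on closed subsets of $A^{\Z}$, restricted to the Borel subset of infinite binary Toeplitz subshifts). It also means the same notion of conjugacy. The first step is to confirm that the class of infinite binary Toeplitz subshifts is a Borel subset of this space. The next step is to confirm that the subspace with separated holes is Borel: the condition $\delta_{p_n}(x)\to\infty$ is independent of the Toeplitz generator and of the period sequence, so it can be expressed by countably many quantifications over finite blocks and periods. Each relation in question is then a Borel equivalence relation on a standard Borel space, and the cited statements apply verbatim.

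For non-smoothness I will invoke \cite[Theorem~4.2]{Thomas}. The underlying idea is a continuous injective reduction of $E_0$ into conjugacy. Toeplitz sequences are built by filling the holes of a periodic skeleton at stage $n$ according to the $n$-th bit of $x\in\Cantor$. Tail-equivalent inputs then give the same orbit closure up to a shift, whereas conjugacy of the outputs determines the input modulo finite changes. Since $E_0$ is not smooth, neither is conjugacy. Here I only need to check that Thomas's alphabet is binary and that his subshifts are infinite, which excludes periodic orbits.

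For the measure statement I will cite \cite[Theorem~1.1]{SabokTsankov}. For the full hyperfiniteness I will cite \cite[Theorem~1]{KayaToeplitz}. The second implies the first, since hyperfinite on the whole space certainly gives hyperfinite on a $\mu$-conull Borel set. The only point requiring care is the one I expect to be the main obstacle, namely matching the definition of \emph{separated holes}. Sabok--Tsankov phrase it through the periodic parts and a fixed generator, while we use the $p$-skeleton formulation of \cite[Sections~2.4--2.5]{KayaToeplitz}. I will check that the two definitions agree: both quantify the distance between consecutive holes of the $p_n$-skeleton along a divisibility chain of essential periods. I will also check that neither depends on the choice of generator within the subshift, which is precisely the independence remark stated before the proposition. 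With the definitions identified, the three citations together give the proposition.
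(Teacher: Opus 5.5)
Your proposal matches the paper, which gives no argument of its own and simply attributes the three assertions to Thomas, Sabok--Tsankov and Kaya. Your checks on the Borel structure, the binary and infinite hypotheses, and the agreement of the separated-holes definitions are appropriate for such a citation; your sketch of the idea behind Thomas's theorem is only heuristic and carries no weight in the argument.
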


Each stage of our construction records a finite group together with quotient homomorphisms. Passing to an inverse limit retains a compatible sequence of these data. This passage allows us to obtain the full $S_\infty$ degree from symbolic systems whose conjugacy relation is countable and Borel.

\subsection{Proof strategy}

We pass through \emph{countably based profinite groups}, that is, inverse limits of sequences of finite groups. The complexity result of Kechris, Nies and Tent \cite[Theorem~4.3, p.~10, author version]{KNT} supplies the source of our reduction; its precise finite-quotient form is stated in Proposition~\ref{ext:knt} and put into our parameter space in Proposition~\ref{prop:profinite-source}. Finite groups are suitable for symbolic encoding because their operations can be recorded by finite tables. The quotient homomorphisms carry the compatibility information needed to reconstruct the group.

A chosen sequence of finite quotients is additional data. Our assignment must depend, up to conjugacy, only on the profinite group. We therefore organize finite quotients together with their homomorphisms and construct common extensions of the resulting finite diagrams. A \emph{factor map} is a continuous surjection intertwining the dynamics. We represent quotient homomorphisms by factor maps between minimal subshifts and preserve their compositions under refinement. Successively meeting the finite extension requirements then makes different quotient presentations give conjugate inverse limits. This use of common extensions and back-and-forth comparison is the projective form of the method developed in \cite{IrwinSolecki,Kubis}; we prove the required extension properties directly.

Recovery imposes a complementary requirement. We must control all factor maps that can arise from an arbitrary conjugacy of the limits. Finite tests ensure that the symbolic maps, after integer shifts, determine homomorphisms of the associated finite groups. A finite-stage factorization then turns a conjugacy and its inverse into compatible maps of quotient systems. Their compositions agree after passage to later stages, and the integer shifts cancel. The resulting inverse maps recover the profinite group.

The finite description of symbolic maps is related to the use of maps between blocks of a common length to study Toeplitz conjugacy by Downarowicz, Kwiatkowski and Lacroix \cite[Section~2]{DKL}. The factor criteria from Deka's thesis, recalled in Propositions~\ref{ext:aligned} and~\ref{ext:tame}, govern the symbolic construction. The categorical treatment of Cantor minimal systems and factor maps by Amini, Elliott and Golestani \cite{AEG} provides another approach to studying the systems together with the maps between them. Here the essential task is to preserve an entire finite diagram while extending it, so that invariance under group isomorphism and recovery from conjugacy hold for the same construction.

Section~\ref{sec:prelim} fixes the parameters and the symbolic criteria. Section~\ref{sec:finite} constructs finite refinements, and Section~\ref{sec:symbolic} realizes them by subshifts. Sections~\ref{sec:amalgamation} and~\ref{sec:recovery} establish invariance and recovery, respectively.
\section{Parameters and symbolic factors}\label{sec:prelim}

\subsection{Profinite groups and Cantor homeomorphisms}\label{sec:parameters}

For profinite groups, $P\cong_{\mathrm{top}}Q$ means that there is a group isomorphism $P\to Q$ that is also a homeomorphism.

Let $\Fhat$ be the free profinite group on a sequence $(x_i)_{i\in\N}$ converging to the identity. Concretely, take the abstract free group on these generators and the kernels of its homomorphisms to finite groups that send all but finitely many $x_i$ to the identity. These kernels form a countable directed family of finite-index normal subgroups and separate points, since finitely generated free groups are residually finite. The resulting topology is Hausdorff and precompact; its completion $\Fhat$ is therefore a compact metrizable group. Every sequence converging to the identity in a profinite group determines a unique continuous homomorphism from $\Fhat$ by sending $x_i$ to its $i$th term.

Every countably based profinite group $P$ is a quotient of $\Fhat$. Indeed, choose a decreasing basis $P=U_0\supseteq U_1\supseteq\cdots$ of open normal subgroups. Concatenate finite generating lifts for the groups $U_n/U_{n+1}$, padding by identities if necessary. This sequence tends to the identity and generates a dense subgroup of $P$, since it generates every finite quotient $P/U_n$. The universal property gives a continuous epimorphism $\Fhat\onto P$.

For a compact metrizable space $Z$, let $K(Z)$ be the hyperspace of its nonempty compact subsets, with the Hausdorff topology. We use
\[
 \mathcal N=\{R\in K(\Fhat):R\text{ is a normal subgroup}\},
 \qquad P_R=\Fhat/R.
\]
The space $\mathcal N$ is closed. For example, if $R_j\to R$ and $x,y\in R$, choose $x_j,y_j\in R_j$ converging to $x,y$. Then $x_jy_j^{-1}\to xy^{-1}$ gives $xy^{-1}\in R$; conjugation is treated in the same way.

Fix a repetition-free enumeration $(M_k)_k$ of the open normal subgroups of $\Fhat$, with multiplication tables for their finite quotients. Enumerating finite-support quotient homomorphisms gives such an enumeration. Equality and inclusion of kernels are decided in the finite image of the product homomorphism; this also computes the induced quotient map $\overline\pi_{l,k}:\Fhat/M_l\onto\Fhat/M_k$ when $M_l\subseteq M_k$. The condition $R\subseteq M_k$ is closed, since $M_k$ is clopen, and then we have the canonical identification
\begin{equation}\label{eq:quotient-presentation}
 P_R/(M_k/R)=\Fhat/M_k.
\end{equation}
Thus finite quotients and their maps have a Borel presentation by natural-number witnesses.

Enumerate the clopen algebra $(U_i)_i$ of $C$. Minimality is expressed by
\begin{equation}\label{eq:minimal-gdelta}
 \Min(C)=\bigcap_{i:U_i\ne\varnothing}\ \bigcup_{n\geq0}
 \left\{T:C=\bigcup_{|j|\leq n}T^j[U_i]\right\}.
\end{equation}
For fixed $i,n$, the finitely many clopen images in this formula are locally constant as functions of $T$. The set in braces is therefore clopen, proving the $G_\delta$ assertion used in the introduction.

We recall the finite-quotient construction of Kechris, Nies and Tent. Fix an odd prime $p$. A simple undirected graph $D$ on $\N$ is
\emph{nice} if it has no triangles or four-cycles and, for every pair
of distinct vertices $u,v$, there is a vertex $w\notin\{u,v\}$ adjacent
to $u$ but not to $v$ \cite[Definition~2.1, p.~2, author version]{ChernikovHempel}.
Write $D\upharpoonright n$ for the induced graph on $\{0,\ldots,n-1\}$. For any such graph, let $Q_n(D)$ be the group on generators
$y_0,\ldots,y_{n-1}$ subject to centrality of all commutators,
exponent $p$, and $[y_i,y_j]=1$ whenever $\{i,j\}$ is an edge of $D$.
Here $[a,b]=a^{-1}b^{-1}ab$, and exponent $p$ means that $g^p=1$ for every group element.

\begin{external}\label{ext:knt}
These groups are finite: each element
has a unique expression consisting of generator powers followed by
powers of $[y_i,y_j]$ for nonedges with $0\leq i<j<n$, with indices in a fixed order and
exponents in $\{0,\ldots,p-1\}$. Thus the multiplication table of
$Q_n(D)$ depends only on $D\upharpoonright n$
\cite[Lemma~4.2 and the proof of Theorem~4.3, pp.~10--11, author version]{KNT}.
The maps $y_n\mapsto1$ and $y_i\mapsto y_i$ for $i<n$ define
surjections $Q_{n+1}(D)\onto Q_n(D)$.
In the notation of \cite[proof of Theorem~4.3, pp.~10--11, author version]{KNT}, if $G(D)$ is the
countable Mekler group and $R_n$ is the normal subgroup generated by
$\{y_i:i\geq n\}$, then $Q_n(D)\cong G(D)/R_n$. Consequently,
$\varprojlim_nQ_n(D)$ is the completion $\widehat G(D)$ used
there.

The following hold.
\begin{enumerate}[label=(\roman*)]
\item\label{ext:knt-nice}
There is a Borel assignment $A\mapsto D(A)$ from graphs on $\N$
to nice graphs such that $A\cong A'$ if and only if
$D(A)\cong D(A')$ \cite[Section~4.2, pp.~9--10, author version]{KNT}.
\item\label{ext:knt-recovery}
For nice graphs $D,D'$, we have
\begin{equation}\label{eq:knt-reconstruction}
D\cong D'\quad\Longleftrightarrow\quad
\varprojlim_n Q_n(D)\cong_{\mathrm{top}}\varprojlim_n Q_n(D').
\end{equation}
This is the reconstruction theorem
\cite[Lemma~4.9, p.~14, author version]{KNT}.
\end{enumerate}
\end{external}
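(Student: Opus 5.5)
The statement is an external proposition: the substantive claims (i) and (ii) are cited from Kechris, Nies and Tent, so the plan is to check the elementary assertions directly and reduce the rest to the cited results.

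\emph{Step 1: finiteness and normal form.} Let $F$ be the free group on $y_0,\ldots,y_{n-1}$. Then $Q_n(D)$ is $F$ modulo the relations $[[a,b],c]=1$, $g^p=1$, and $[y_i,y_j]=1$ for edges. First I would show that $Q_n(D)$ is nilpotent of class at most $2$ with exponent $p$. Since $p$ is odd, the identity $(ab)^p=a^pb^p[b,a]^{\binom p2}$ holds in class $2$, and $p\mid\binom p2$. Hence the exponent condition reduces to $y_i^p=1$ and $[y_i,y_j]^p=1$.

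The derived subgroup is central and generated by the commutators $[y_i,y_j]$ with $i<j$. These are bilinear and alternating, so $Q_n(D)/Q_n(D)'$ is elementary abelian of rank at most $n$. This gives existence of the normal form: every element is $y_0^{e_0}\cdots y_{n-1}^{e_{n-1}}\prod[y_i,y_j]^{f_{ij}}$ over nonedges.

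For uniqueness I would construct an explicit group of exactly this cardinality. Take the set $\mathbb F_p^n\times\mathbb F_p^{N}$, where $N$ indexes the nonedges $i<j$. Define the product
\[
(e,f)(e',f')=\Bigl(e+e',\ f+f'+\beta(e,e')\Bigr),
\]
where $\beta(e,e')_{ij}=e_je'_i$ is a bilinear cocycle. This is a group of exponent $p$ (using $p$ odd), it satisfies the defining relations, and it surjects from $Q_n(D)$ with $|\,\cdot\,|=p^{n+|N|}$. So the normal form is unique. The multiplication table is then computed from $D\upharpoonright n$ alone.

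\emph{Step 2: the surjections.} The map $y_n\mapsto1$, $y_i\mapsto y_i$ for $i<n$ respects all relations, since the relations for $n+1$ generators restrict to those for $n$ and kill the others. It is onto because the image contains all generators.

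\emph{Step 3: identification with $G(D)/R_n$ and the inverse limit.} The Mekler group $G(D)$ is presented by the same relations on all $y_i$. Quotienting by $R_n$ kills exactly $y_i$ for $i\ge n$ and gives the presentation of $Q_n(D)$. Moreover, $\bigcap_n R_n=1$ by the normal form in $G(D)$, and the $R_n$ have finite index. So the profinite completion relative to this filtration is $\varprojlim_n G(D)/R_n$, matching the completion used in \cite{KNT}.

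\emph{Step 4: items (i) and (ii).} These are taken from \cite[Section~4.2; Lemma~4.9]{KNT}. The only check needed is that our $\varprojlim_n Q_n(D)$ is their $\widehat G(D)$, which is Step~3.

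The main obstacle is the uniqueness half of the normal form. The cocycle model in Step 1 must genuinely satisfy $[y_i,y_j]=1$ for edges while keeping the nonedge commutators independent. I would verify this by computing $[(\epsilon_i,0),(\epsilon_j,0)]=(0,\pm\epsilon_{ij})$ for $i<j$, and then passing to the quotient by the subspace spanned by the edge coordinates.
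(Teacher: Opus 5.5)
Your proposal is correct. For items (i) and (ii) it matches the paper, which gives no argument of its own and cites Kechris--Nies--Tent for the whole proposition, including the normal form (their Lemma~4.2) and the identification $Q_n(D)\cong G(D)/R_n$.

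Where you differ is that you prove the elementary group-theoretic facts yourself:
\begin{enumerate}
\item existence of the normal form, by class-$2$ commutator calculus;
\item uniqueness, by mapping $Q_n(D)$ onto the explicit central extension $\mathbb F_p^n\times\mathbb F_p^N$ with bilinear cocycle $\beta(e,e')_{ij}=e_je'_i$. This model has exponent $p$ because $(e,f)^p=(0,\binom{p}{2}\beta(e,e))$ and $p$ is odd, and in it $[(\epsilon_i,0),(\epsilon_j,0)]=(0,-\epsilon_{ij})$ for nonedges $i<j$;
\item the quotient identification, by comparing presentations.
\end{enumerate}
Your route gives a self-contained proof and an explicit multiplication table in normal-form coordinates. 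That table is what the paper relies on in Proposition~\ref{prop:profinite-source} when it says the kernels $K_n(D)$ depend locally constantly on $D$. The paper's citation is shorter and keeps the notation aligned with the source of (i) and (ii), which must be cited in either case.

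A few cosmetic points:
\begin{enumerate}
\item Since $N$ indexes only the nonedges, the edge commutators already vanish in your model. The ``quotient by edge coordinates'' in your last paragraph is therefore unnecessary.
\item Reducing the exponent law to $y_i^p=[y_i,y_j]^p=1$ is not needed. Existence only uses these instances of the law, and you check exponent $p$ in the model directly.
\item In Step~3, say explicitly that the isomorphisms $Q_n(D)\cong G(D)/R_n$ commute with the bonding maps, since both fix $y_i$ for $i<n$. That is what makes the two inverse limits agree.
\end{enumerate}
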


\begin{proposition}\label{prop:profinite-source}
There is a Borel map $A\mapsto R_A$ from graphs on $\N$ to $\mathcal N$ such that
\begin{equation}\label{eq:graph-normal-reduction}
 A\cong A'\quad\Longleftrightarrow\quad
 \Fhat/R_A\cong_{\mathrm{top}}\Fhat/R_{A'}.
\end{equation}
\end{proposition}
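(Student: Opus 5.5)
We compose the Borel map $A\mapsto D(A)$ of Proposition~\ref{ext:knt}\ref{ext:knt-nice} with a Borel map $D\mapsto R_D\in\mathcal N$ satisfying $\Fhat/R_D\cong_{\mathrm{top}}\varprojlim_nQ_n(D)$. Then \eqref{eq:graph-normal-reduction} follows from the invariance of $A\mapsto D(A)$ and the reconstruction equivalence \eqref{eq:knt-reconstruction}. So the work is to produce $R_D$ Borel in $D$.

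\emph{Construction.} Let $P_D=\varprojlim_nQ_n(D)$ and write $\overline y_i$ for the image of the generator $y_i$. The component of $\overline y_i$ in $Q_n(D)$ is $y_i$ for $i<n$, and it is $1$ for $i\ge n$, since $y_i\mapsto1$ under $Q_{i+1}(D)\onto Q_i(D)$. Hence $\overline y_i\to1$. The universal property of $\Fhat$ gives a continuous homomorphism $\phi_D:\Fhat\to P_D$ with $x_i\mapsto\overline y_i$. Each projection $P_D\to Q_n(D)$ is surjective, and its image is generated by $y_0,\dots,y_{n-1}$. So $\phi_D$ has dense image, and by compactness it is onto. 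Put $R_D=\ker\phi_D$. Then $\Fhat/R_D\cong_{\mathrm{top}}P_D$, because a continuous bijective homomorphism between compact groups is a topological isomorphism.

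\emph{Borelness.} Let $\phi_{D,n}:\Fhat\to Q_n(D)$ be the composite of $\phi_D$ with the $n$th projection. This is the finite-support homomorphism with $x_i\mapsto y_i$ for $i<n$ and $x_i\mapsto1$ for $i\ge n$. Its kernel $K_n(D)$ is open normal, and $R_D=\bigcap_nK_n(D)$ is a decreasing intersection. By Proposition~\ref{ext:knt}, the table of $Q_n(D)$ is determined by the finite graph $D\upharpoonright n$. Thus $K_n(D)$ equals $M_{k}$ for an index $k=k(n,D\upharpoonright n)$ that depends only on $D\upharpoonright n$. It can be found by searching the enumeration $(M_k)_k$ and comparing kernels in finite images, as described before \eqref{eq:quotient-presentation}. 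So $D\mapsto K_n(D)$ is locally constant, and in particular continuous into $K(\Fhat)$.

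The main point to check is that the decreasing intersection map $(K_n)_n\mapsto\bigcap_nK_n$ is Borel, in fact continuous on decreasing sequences, into $K(\Fhat)$. This is the standard fact that decreasing sequences of compact sets converge in the Hausdorff metric to their intersection. Alternatively, for a clopen $U$ one has $R_D\subseteq U$ if and only if $K_n(D)\subseteq U$ for some $n$, by compactness. Also $R_D\cap U\neq\varnothing$ if and only if $K_n(D)\cap U\ne\varnothing$ for all $n$. The clopen sets form a basis, so these conditions generate the Vietoris Borel structure. Both conditions are Borel in $D$, so $D\mapsto R_D$ is Borel.

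Composing with $A\mapsto D(A)$ gives $R_A:=R_{D(A)}$. It satisfies \eqref{eq:graph-normal-reduction} by Proposition~\ref{ext:knt}\ref{ext:knt-nice} and \ref{ext:knt-recovery}.
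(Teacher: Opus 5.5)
Your proposal is correct and follows essentially the paper's proof. You set $R_D=\bigcap_n K_n(D)$, where the kernels $K_n(D)$ of the finite-support maps onto $Q_n(D)$ are locally constant in $D$. You check Borelness by clopen hit tests, and you identify $\Fhat/R_D\cong_{\mathrm{top}}\varprojlim_n Q_n(D)$ via the universal property plus density. The paper instead argues injectivity and finds a common representative for each compatible coset family.

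One claim needs correcting: the intersection map on decreasing sequences is not continuous in general. For example, take $K^{(j)}_n=\Fhat$ for $n<j$ and $K^{(j)}_n=M$ for $n\ge j$, with $M$ a proper open normal subgroup. That map is only a pointwise Hausdorff limit of the continuous coordinate maps, which still makes it Borel. So drop ``in fact continuous''; your clopen-test argument works as written.
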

\begin{proof}
Take the Borel assignment $A\mapsto D(A)$ from Proposition~\ref{ext:knt}(i). For a nice graph $D$, the assignment $x_i\mapsto y_i$ for $i<n$ and $x_i\mapsto1$ otherwise extends to a continuous epimorphism
$q_{D,n}:\Fhat\onto Q_n(D)$, since its prescribed generator images tend to $1$. The open normal kernels $K_n(D)=\ker q_{D,n}$ decrease with $n$. They depend locally constantly on $D$, by the finite-table assertion of Proposition~\ref{ext:knt}. Put
\begin{equation}\label{eq:graph-kernel-intersection}
 R_A=\bigcap_n K_n(D(A)).
\end{equation}
For every clopen $U\subseteq\Fhat$, nested compactness gives
\begin{equation}\label{eq:kernel-hits}
 R_A\cap U\ne\varnothing
 \quad\Longleftrightarrow\quad
 (\forall n)\ K_n(D(A))\cap U\ne\varnothing.
\end{equation}
The right-hand side is Borel. These clopen hit predicates generate the hyperspace Borel structure, so $A\mapsto R_A$ is Borel. The compatible quotient maps identify $\Fhat/R_A$ with $\varprojlim_nQ_n(D(A))$: the induced map is injective by the definition of $R_A$, and every compatible family of cosets has a common representative by compactness. Now \eqref{eq:knt-reconstruction} gives \eqref{eq:graph-normal-reduction}.
\end{proof}

We will construct a Borel map $R\mapsto T_R$ from $\mathcal N$ to $\Min(C)$ such that
\begin{equation}\label{eq:main}
 P_R\cong_{\mathrm{top}}P_S\quad\Longleftrightarrow\quad T_R\cong T_S.
\end{equation}
Propositions~\ref{prop:countable-universality} and~\ref{prop:profinite-source} then give the lower bound in Theorem~\ref{thm:main}.

\subsection{Construction words and aligned maps}

Positions in finite words are numbered from zero. Let $A$ be a finite alphabet and let $\sigma$ be the left shift on $A^{\Z}$, so $(\sigma x)(t)=x(t+1)$. We use the following conventions for subshifts. If $W$ is a finite set of words of one length, let $\operatorname{Seg}(W)$ consist of all shifts of bi-infinite concatenations of words from $W$.

A \emph{scale} is a strictly increasing sequence $(L_n)$ of positive
integers such that $L_n\mid L_{n+1}$. We use the following form of Deka's
definition \cite[Definition~5.2.1, p.~49]{Deka}. A \emph{construction
sequence} on this scale, beginning at $b\in\N$, is a sequence of finite
sets $W_n\subseteq A^{L_n}$, $n\geq b$, with the following properties for
every $n\geq b$:
\begin{enumerate}[label=(\alph*)]
\item $|W_n|\geq2$;
\item each word in $W_{n+1}$ is a concatenation of $W_n$-words;
\item every ordered pair of $W_n$-words occurs as consecutive blocks in every $W_{n+1}$-word;
\item a $W_n$-word occurring in $uv$, for $u,v\in W_n$, starts at $0$ or $L_n$.
\end{enumerate}
Condition (d) is \emph{unique readability}. Two sequences are compared on
a common tail. Define
\[
 X_W=\bigcap_{n\geq b}\operatorname{Seg}(W_n).
\]
The sets in this intersection are nested, nonempty and compact. Unique readability gives a unique decomposition of each $x\in X_W$ at every level. Every construction word occurs in $X_W$. Indeed, for $w\in W_n$ and every $m\geq n$, the compact set $\operatorname{Seg}(W_m)$ contains a point whose coordinates $0,\ldots,L_n-1$ form $w$, by (c). Intersecting these nested sets with the cylinder specified by $w$ gives a point of $X_W$ with $w$ at a level-$n$ boundary. Every finite word occurring in $X_W$ lies in a pair of sufficiently long construction words; that pair occurs in every word at the next level. Hence the finite word occurs with bounded gaps, and $X_W$ is minimal.

For $n\geq b$, let
$b_n(x)\in\Z/L_n\Z$ be the residue class of the boundaries in the
level-$n$ decomposition. These residues are continuous: a block covering
the origin lies in the window $[-L_n+1,L_n-1]$, and unique readability
determines its starting residue. Set $O=\varprojlim_n\Z/L_n\Z$ and
\begin{equation}\label{eq:rho-convention}
 \rho_{W,n}(x)=-b_n(x)\pmod{L_n}\quad(n\geq b),\qquad
 \rho_W:X_W\longrightarrow O,
\end{equation}
where $\rho_W(x)$ is the unique element of $O$ whose $n$th coordinate is
$\rho_{W,n}(x)$ for every $n\geq b$. Compatibility of the residues makes
this definition possible, and reduction modulo $L_n$ supplies the
coordinates below $b$. The rotation $z\mapsto z+1$ on $O$ is the associated
\emph{odometer}, and $\rho_W(\sigma x)=\rho_W(x)+1$. The image is a
nonempty compact invariant subset of the minimal rotation on $O$, so
$\rho_W$ is onto.

For a factor $F:X_W\to X_V$ on the same length scale, the map
\begin{equation}\label{eq:odometer-offset}
 x\longmapsto \rho_V(Fx)-\rho_W(x)\in O
\end{equation}
is continuous and shift-invariant, hence constant by minimality. Denote
this value by $\tau(F)$ and call it the \emph{odometer offset} of $F$.
The map is \emph{aligned} if $\tau(F)=0$. Under composition these offsets
add. The shift $\sigma^k$ has offset $(k\bmod L_n)_n$; this identifies
$\Z$ with a subgroup of $O$, since $L_n\to\infty$. For a table
$f:W_n\to V_n$, let $f^{[m]}$, for $m\geq n$, apply $f$ to the
$W_n$-blocks of a $W_m$-word. Let $f^*$ apply $f$ to the unique
bi-infinite level-$n$ decomposition, keeping its boundary positions.
Initially $f^*$ takes values in the full target shift.

We use the following aligned-factor criterion of Deka \cite[Lemma~5.2.8, p.~51]{Deka}.
\begin{external}\label{ext:aligned}
For construction sequences $W,V$ on the same scale and $f:W_n\to V_n$, the map $f^*$ is an aligned factor $X_W\to X_V$ if and only if
\begin{equation}\label{eq:external-aligned}
 f^{[m]}[W_m]=V_m\qquad(m\geq n).
\end{equation}
If $f$ is bijective and this condition holds, $f^*$ is a conjugacy.
\end{external}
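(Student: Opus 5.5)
The statement has two directions. We need to show that $f^*$ is an aligned factor onto $X_V$ exactly when $f^{[m]}[W_m]=V_m$ for all $m\geq n$. We also need the conjugacy addendum.

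\emph{Sufficiency.} Assume \eqref{eq:external-aligned}. First I would check that $f^*$ is continuous and commutes with the shift. Coordinate $t$ of $f^*x$ depends only on the level-$n$ block of $x$ covering $t$. By unique readability, that block is determined by a window of radius about $2L_n$, so $f^*$ is a sliding block code. It commutes with $\sigma$ because it respects the boundary positions.

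Next, to see $f^*[X_W]\subseteq X_V$, fix $m\geq n$. The level-$m$ decomposition of $x\in X_W$ refines to its level-$n$ decomposition, with $W_m$-blocks that are concatenations of $W_n$-blocks. So $f^*x$ is a concatenation of words $f^{[m]}(w)\in V_m$ at the same boundaries, which gives $f^*x\in\operatorname{Seg}(V_m)$. Intersecting over $m$ gives $f^*x\in X_V$.

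For surjectivity, the image is a nonempty closed invariant subset of the minimal system $X_V$, so $f^*$ is onto. Alignment follows because $f^*x$ has its level-$m$ $V$-boundaries at the level-$m$ $W$-boundaries of $x$, for every $m\geq n$. Unique readability in $X_V$ makes these the decomposition boundaries, so $b_m(f^*x)=b_m(x)$ for all $m\geq n$. Hence $\tau(f^*)=0$.

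\emph{Necessity.} Suppose $f^*$ is an aligned factor $X_W\to X_V$. Alignment says that the level-$m$ boundaries of $f^*x$ coincide with those of $x$ for all large $m$. Since $L_m\mid L_{m'}$, the boundaries then coincide for every $m\geq n$.

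To get $f^{[m]}[W_m]\subseteq V_m$, take $w\in W_m$ and a point $x$ having $w$ at a level-$m$ boundary; such a point exists by the paragraph after the definition of $X_W$. The block of $f^*x$ in that window is $f^{[m]}(w)$, and it sits at a level-$m$ boundary of $f^*x\in X_V$. It is therefore the level-$m$ block of the decomposition, so it lies in $V_m$.

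For the reverse inclusion, take $v\in V_m$. By minimality of $X_V$ and the fact that every construction word occurs, some $y\in X_V$ has $v$ at a level-$m$ boundary. Write $y=f^*x$. Since the boundaries agree, the corresponding $W_m$-block $w$ of $x$ satisfies $f^{[m]}(w)=v$.

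The main delicate point is exactly this boundary-identification step. It is the use of unique readability (d) to say that a $V_m$-word placed at a position congruent to the boundary residue really is a block of the decomposition. I would handle it by noting that $b_m$ is defined from the unique decomposition and that the residues of $f^*x$ and $x$ agree.

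\emph{Conjugacy addendum.} If $f$ is bijective, then $f^{-1}:V_n\to W_n$ satisfies $(f^{-1})^{[m]}=(f^{[m]})^{-1}$. By \eqref{eq:external-aligned}, this maps $V_m$ onto $W_m$. Sufficiency gives an aligned factor $(f^{-1})^*$, and it is inverse to $f^*$ blockwise. Hence $f^*$ is a conjugacy.
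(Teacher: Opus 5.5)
Your argument is correct. The paper gives no proof of this statement to compare it with: it imports the result from Deka's thesis \cite[Lemma~5.2.8, p.~51]{Deka}. Your proof is therefore a self-contained verification rather than an alternative route.

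Both directions rest on the fact you isolate. The level-$m$ decomposition of $x\in X_W$ refines to its unique level-$n$ decomposition. Hence $f^*x$ is the concatenation of the words $f^{[m]}(w)$ placed at the level-$m$ boundaries of $x$. Uniqueness of decompositions in $X_V$ then turns equality of boundary residues into equality of blocks. Your necessity argument (every construction word occurs at a boundary, and the target decomposition is unique) is the same reasoning the paper uses later for uniqueness of tables in the proof of Theorem~\ref{thm:factors}.

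A few small remarks:
\begin{enumerate}
\item Minimality of $X_V$ is not needed to find $y$ with $v$ at a level-$m$ boundary. The remark after the definition of $X_W$ already supplies such a point.
\item For the sliding-block description, the window $[t-L_n+1,t+L_n-1]$ already determines the level-$n$ block covering $t$, by unique readability. A radius of $2L_n$ works but is more than needed.
\item In the addendum, the identity $(f^{-1})^*f^*=\id$ uses that the level-$n$ decomposition of $f^*x$ in $X_V$ has the same boundaries as that of $x$. You proved this in the sufficiency step, so it is worth citing there explicitly.
\item Alternatively, in the addendum, boundary preservation and bijectivity of $f$ make $f^*$ injective. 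A continuous bijection of compact metric spaces commuting with $\sigma$ is then a conjugacy, with no need to construct $(f^{-1})^*$.
\end{enumerate}
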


The second criterion ensures that every factor is aligned after an integer shift. For $3\mid L$, the middle third of a word of length $L$ consists of its letters at positions $L/3,\ldots,2L/3-1$. In a concatenation of length-$L_n$ words, brackets $[\cdot]_j$ denote its $j$th such block, numbered from zero. Deka's second criterion is the following \cite[Lemma~5.2.11, pp.~51--52]{Deka}.
\begin{external}\label{ext:tame}
Let $W,V$ be construction sequences on the same scale. Suppose that, for all sufficiently large $n$:
\begin{enumerate}[label=(\roman*)]
\item distinct words in $V_n$ have distinct middle thirds;
\item writing $\ell=L_{n+1}/L_n$, for every $1\leq c\leq\ell-2$, every table $\psi:W_n^2\to V_n$, every $x,y\in W_{n+1}$ and $z\in V_{n+1}$, some $0\leq j<\ell$ satisfies
\begin{equation}\label{eq:external-cross}
 [z]_j\ne\psi([xy]_{j+c},[xy]_{j+c+1});
\end{equation}
\item $3\mid L_n$ and $L_n\geq6L_{n-1}$.
\end{enumerate}
Then every factor $F:X_W\to X_V$ has the form $F=\sigma^k f^*$ for some $k\in\Z$, some common level $n$ and some table $f:W_n\to V_n$, with $f^*$ aligned.
\end{external}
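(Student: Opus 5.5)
The plan is to treat $F$ as a sliding block code, show that its odometer offset $\tau(F)$ is an integer $k$, and then read off a level-$n$ table for the aligned map $\sigma^{-k}F$.

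\emph{Step 1 (normalising at a large level).} By the Curtis--Hedlund--Lyndon theorem, $F$ is given by a block code of some radius $r$. Fix $n$ so large that (i)--(iii) hold at levels $n-1$, $n$, $n+1$ and beyond, and $r\ll L_{n-1}$. Let $s\in\Z$ be the representative of the $(n-1)$-coordinate of $\tau(F)$ with $|s|\le L_{n-1}/2$, and put $G=\sigma^{-s}F$. Then $G$ carries level-$(n-1)$ boundaries of $x$ to level-$(n-1)$ boundaries of $Gx$. Its radius is at most $r+L_{n-1}/2$, which is below $L_n/3$ by (iii).

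\emph{Step 2 (a pair table at level $n$).} The level-$n$ offset of $G$ is some $a\in\Z/L_n\Z$ with $L_{n-1}\mid a$. Each level-$n$ block of $Gx$ is therefore determined by at most two consecutive level-$n$ blocks of $x$, which gives a table $W_n^2\to V_n$ of the form used in (ii). I would first show that $a=0$ modulo $L_n$. Pass down one level, where offsets are multiples of $L_{n-1}$. Apply (ii) at level $n-1$: a nonzero offset $c$ with $1\le c\le \ell-2$ would force every $W_n$-image to fail to be a $V_n$-word, which is impossible. The wraparound value $c=\ell-1$ corresponds to offset $-1$, i.e.\ to the table on the pairs $([xy]_{j-1},[xy]_j)$. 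I would handle it either by reindexing $z$ against the previous block, or by exchanging the roles of $x$ and $y$ in (ii). I expect this bookkeeping, together with keeping the radius small enough for the middle-third argument, to be the main obstacle, and I would check it carefully by drawing the block windows.

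\emph{Step 3 (induction up the levels).} The same argument, run at every level $m\ge n$ with the now-aligned $G$, shows that the level-$(m+1)$ offset is $cL_m$ with $c=0$. Concretely, the image $z\in V_{m+1}$ has level-$m$ blocks $[z]_j=\psi([xy]_{j+c},[xy]_{j+c+1})$ for a fixed table $\psi$. Condition (ii) excludes $1\le c\le\ell-2$, and the case $c=\ell-1$ is handled as in Step 2. Hence every coordinate of $\tau(G)$ vanishes, so $\tau(F)=s=:k\in\Z$ and $G=\sigma^{-k}F$ is aligned.

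\emph{Step 4 (extracting the table).} Since $G$ is aligned with radius below $L_n/3$, the middle third of each level-$n$ block of $Gx$ depends only on the corresponding level-$n$ block $w$ of $x$. By (i), that middle third determines the whole $V_n$-word. This gives a well-defined table $f:W_n\to V_n$ with $G=f^*$, and hence $F=\sigma^kf^*$ with $f^*$ aligned.
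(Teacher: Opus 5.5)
The paper does not prove this statement; it imports it from Deka's thesis (Lemma~5.2.11), so your argument has to stand on its own. Your ingredients are the right ones, but Step~2, which carries the content of the lemma, has a genuine gap, and Step~3 inherits it.

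\textbf{The gap.} Condition (ii) only controls configurations in which $[z]_j$ is a function of the two blocks $[xy]_{j+c},[xy]_{j+c+1}$ with $1\le c\le\ell-2$. After your normalisation, $G=\sigma^{-s}F$ is aligned at level $n-1$, and $(Gx)(i)$ depends on $x[i-s-r,i-s+r]$. So the level-$(n-1)$ block of $Gx$ lying exactly over $[xy]_{j+c}$ behaves as follows:
\begin{enumerate}
\item If $|s|<r$, it depends on all three of $[xy]_{j+c-1},[xy]_{j+c},[xy]_{j+c+1}$. The same happens in every case if you only use your symmetric radius bound $r+L_{n-1}/2$.
\item If $s\ge r$, it depends only on $([xy]_{j+c-1},[xy]_{j+c})$. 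Then (ii) excludes $2\le c\le\ell-1$ but not $c=1$.
\item If $s\le -r$, it depends on $([xy]_{j+c},[xy]_{j+c+1})$, and $c=\ell-1$ survives.
\end{enumerate}
Reindexing $z$ or exchanging $x$ and $y$ cannot help. Condition (ii) already ranges over all $x,y,z$, and the surviving configurations are exactly those with shift $0$ or $\ell-1$ in the notation of (ii), which it does not cover. So $a=0$ is not proved. What (ii) actually gives is that the level-$n$ boundaries of $Gx$ are displaced from those of $x$ by $0$ or $\pm L_{n-1}$. Step~3 has the same defect: an aligned $G$ of positive radius does not have level-$m$ blocks of the form $\psi([xy]_{j+c},[xy]_{j+c+1})$.

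\textbf{The missing idea.} Use (i) before (ii), and let (iii) absorb the one-block ambiguity.
\begin{enumerate}
\item If $|s|<r$, your middle-third argument already works at level $n-1$, since the radius is below $2r\le L_{n-1}/3$. The argument below can then start at level $n-1$.
\item Otherwise, the displacement bound gives an integer $k$ with $|k|\le\frac32L_{n-1}$ such that $\sigma^{-k}F$ is aligned at level $n$. Its radius $r+\frac32L_{n-1}$ is at most $L_n/3$; this is exactly where $L_n\ge6L_{n-1}$ is used, with $r\le L_{n-1}/2$.
\item Now do Step~4 first. It gives $\sigma^{-k}F=f_n^*$ on aligned level-$n$ blocks, so the level-$n$ blocks of the target word $z$ are $f_n([xy]_{j+c})$.
\item Only now apply (ii). With $\psi(u,v)=f_n(u)$ it excludes $1\le c\le\ell-2$. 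With shift $\ell-2$ and $\psi(u,v)=f_n(v)$ it excludes $c=\ell-1$, because $f_n([xy]_{j+\ell-1})$ is the second entry of the pair at shift $\ell-2$, and $\ell-2\ge1$.
\item Hence $\sigma^{-k}F$ is aligned at level $n+1$, where $f_n^{[n+1]}$ is again a one-block table. Induction gives $\tau(\sigma^{-k}F)=0$, that is, $\tau(F)=k$, and $F=\sigma^kf_n^*$ with $f_n^*$ aligned.
\end{enumerate}
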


These criteria separate the two tasks in our word construction: retaining the intended block maps and forcing every factor to arise from a block map. The finite group refinements below address the first task; the separation conditions in Section~\ref{sec:symbolic} address the second.

\section{Finite groups and regular refinements}\label{sec:finite}

We encode a finite quotient homomorphism by a map between finite sets with free group actions. Additional orbit coordinates allow common extensions of finite diagrams; constant fibre sizes make refinement compatible with every retained map.

\subsection{Affine maps and common extensions}
\begin{definition}\label{def:affine}
All finite sets with free group actions in this paper are nonempty. A finite free right $G$-set is written as $I\times G$, using chosen orbit representatives. We call a map $p:I\times G\to J\times H$ \emph{regular affine} if
\begin{equation}\label{eq:affine}
 p(i,g)=(\varphi(i),c_iq(g)),
\end{equation}
where $q:G\onto H$ is an epimorphism, $c_i\in H$, and $\varphi:I\onto J$ has constant positive fibres. We call $q$ the group homomorphism associated with $p$.
\end{definition}
The left placement of $c_i$ gives right equivariance:
\[
 p(i,gr)=p(i,g)q(r)\qquad(i\in I,\ g,r\in G).
\]
For any finite free action with its underlying set coded by natural numbers, we choose the least point of each orbit as its representative. This fixes its coordinates $I\times G$ without additional choices.

\begin{lemma}\label{lem:affine}
Regular affine maps contain identities and are closed under composition. The evaluated table of such a map determines its associated group homomorphism, and this assignment respects identities and composition.
\end{lemma}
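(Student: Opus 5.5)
The plan is to verify the three assertions directly from the formula \eqref{eq:affine}, working throughout with the fixed coordinates $I\times G$.

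\emph{Identities.} The identity of $I\times G$ is regular affine with $\varphi=\id_I$ (fibres of size one), $c_i=1$ and $q=\id_G$.

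\emph{Composition.} Let $p(i,g)=(\varphi(i),c_iq(g))$ map $I\times G\to J\times H$, and let $p'(j,h)=(\varphi'(j),d_jq'(h))$ map $J\times H\to K\times L$. Then
\[
 p'p(i,g)=\bigl(\varphi'\varphi(i),\,d_{\varphi(i)}q'(c_i)\,q'q(g)\bigr).
\]
The map $q'q$ is an epimorphism. The element $d_{\varphi(i)}q'(c_i)$ lies in $L$ and depends only on $i$. For $k\in K$, the fibre of $\varphi'\varphi$ over $k$ is a disjoint union of the $\varphi$-fibres over the points of $(\varphi')^{-1}(k)$. Its size is therefore the product of the two constant fibre sizes, so it is constant and positive. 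Hence $p'p$ is regular affine with associated homomorphism $q'q$.

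\emph{Determination of $q$.} This is the only point needing care, since the data $(\varphi,c,q)$ are a priori a choice. Fix any $i\in I$. Evaluating the table gives $p(i,1)=(\varphi(i),c_i)$, which recovers $c_i$. Then $p(i,g)=(\varphi(i),c_iq(g))$ gives
\[
 q(g)=c_i^{-1}\,\pr_2 p(i,g).
\]
This expression is independent of $i$, because it equals $q(g)$ for every $i$. Thus $q$ is a function of the evaluated table alone. Equivalently, by the right equivariance $p(i,gr)=p(i,g)q(r)$ and freeness of the $H$-action, $q(r)$ is the unique $h\in H$ with $p(i,r)=p(i,1)h$. This characterization uses only the set map together with the two free actions, so it does not depend on how $p$ was written. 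The same argument shows that $\varphi$ and the $c_i$ are determined by the table, with respect to the chosen representatives. This is presumably why the paper fixes representatives canonically.

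\emph{Functoriality.} The associated homomorphism of the identity is $\id_G$. By the composition computation, the associated homomorphism of $p'p$ is $q'q$. Since the table determines the homomorphism uniquely, the assignment $p\mapsto q$ is well defined and respects identities and composition.

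The expected obstacle is only bookkeeping: the composite translation $d_{\varphi(i)}q'(c_i)$ must remain on the left, which relies on $q'$ being a homomorphism. Nonemptiness of the $G$-sets ensures that some $i$ exists for the recovery step.
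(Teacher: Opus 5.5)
Your proposal is correct and follows essentially the same route as the paper: the same composition formula with fibre sizes multiplying, the identity data $\varphi=\id$, $c_i=1$, $q=\id$, and recovery of $q$ through $q(g)=c_i^{-1}\pr_H(p(i,g))$ after reading $c_i$ off $p(i,1)$. Your extra remark is also correct: $q(r)$ is the unique $h$ with $p(i,r)=p(i,1)h$. It adds the observation that $q$ is intrinsic to the equivariant map and does not depend on the chosen orbit representatives.
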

\begin{proof}
For $p(i,g)=(\varphi(i),c_iq(g))$ and $r(j,h)=(\psi(j),d_js(h))$, direct evaluation gives
\begin{equation}\label{eq:affine-composition}
 (r\circ p)(i,g)=\bigl(\psi\varphi(i),d_{\varphi(i)}s(c_i)(sq)(g)\bigr).
\end{equation}
If the fibres of $\varphi$ and $\psi$ have sizes $a$ and $b$, those of $\psi\varphi$ have size $ab$. The associated homomorphism is $sq$. Identities are given by $\varphi=\id$, $q=\id$ and $c_i=1$. Finally, evaluation at $(i,1)$ recovers $\varphi(i),c_i$. Writing $\pr_H$ for the group-coordinate projection, we obtain
\begin{equation}\label{eq:recover-q}
 q(g)=c_i^{-1}\pr_H(p(i,g))\qquad(i\in I,\ g\in G).
\end{equation}
Hence equality of evaluated tables implies equality of the associated homomorphisms.
\end{proof}

The positivity assertion in the next lemma is the finite-group lifting theorem of Gasch\"utz \cite[pp.~249--252]{Gaschutz}. We include the counting argument because its constant-fibre conclusion is also needed.

\begin{lemma}\label{lem:lifts}
Let $q:G\onto H$ be an epimorphism of finite groups and $m\geq1$. For a generating tuple $\mathbf h=(h_1,\ldots,h_m)$ of $H$, the integer
\begin{equation}\label{eq:lift-count}
 N_q(\mathbf h)=\left|\left\{\mathbf g\in G^m:q(g_i)=h_i\ (1\leq i\leq m),
 \langle g_1,\ldots,g_m\rangle=G\right\}\right|
\end{equation}
is independent of $\mathbf h$. If $m\geq d(G)$, where $d(G)$ is the least size of a generating set, this integer is positive.
\end{lemma}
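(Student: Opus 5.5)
We prove independence of $\mathbf h$ by Möbius inversion over subgroups of $G$, counting lifts that generate a given subgroup. Positivity then follows by comparing with the sum over all generating tuples of $H$.

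For a subgroup $K\le G$, let
\[
 \Phi_K(\mathbf h)=\bigl|\{\mathbf g\in K^m:q(g_i)=h_i,\ \langle\mathbf g\rangle=K\}\bigr|,
 \qquad
 \Sigma_K(\mathbf h)=\bigl|\{\mathbf g\in K^m:q(g_i)=h_i\}\bigr|,
\]
so that $N_q(\mathbf h)=\Phi_G(\mathbf h)$ and $\Sigma_K=\sum_{L\le K}\Phi_L$.

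If $q[K]\ne H$, then $\Sigma_K(\mathbf h)=0$ and hence $\Phi_K(\mathbf h)=0$. Indeed, any lift tuple inside $K$ maps into $q[K]$, but it must also generate $H$, since $\mathbf h$ does.

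If $q[K]=H$, each $h_i$ has exactly $|K\cap\ker q|$ preimages in $K$. Hence $\Sigma_K(\mathbf h)=|K\cap\ker q|^m$, which does not depend on $\mathbf h$.

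Möbius inversion on the subgroup lattice gives
\[
 \Phi_G(\mathbf h)=\sum_{K\le G}\mu(K,G)\,\Sigma_K(\mathbf h)
 =\sum_{K\le G,\ q[K]=H}\mu(K,G)\,|K\cap\ker q|^m .
\]
This expression is independent of $\mathbf h$. The only care needed here is the observation that every $K$ with $q[K]\ne H$ contributes zero uniformly in $\mathbf h$.

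For positivity, assume $m\ge d(G)$. Then $G$ has a generating $m$-tuple $\mathbf g$, obtained by padding a minimal generating set with identities. Its image $q(\mathbf g)$ is a generating $m$-tuple of $H$. Hence
\[
 \sum_{\mathbf h}N_q(\mathbf h)=\bigl|\{\mathbf g\in G^m:\langle\mathbf g\rangle=G\}\bigr|>0,
\]
where the sum runs over generating $m$-tuples $\mathbf h$ of $H$. Every generating $m$-tuple of $G$ is counted exactly once on the left, since its image generates $H$. So some $N_q(\mathbf h)$ is positive, and by independence all of them are.

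I expect no serious obstacle. The main point is remembering that positivity needs the constancy first: a direct construction of a generating lift of an arbitrary $\mathbf h$ would be Gaschütz's argument, and the counting identity sidesteps it.
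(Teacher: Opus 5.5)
Your proof is correct and uses the same decomposition as the paper: partition the lifts of $\mathbf h$ by the subgroup they generate, noting that any $K$ with $q[K]=H$ (equivalently $K\ker q=G$) contains exactly $|K\cap\ker q|^m$ lifts. The one difference is how the resulting triangular system is solved: you invert it in one step by M\"obius inversion, obtaining the explicit formula $\sum_{K\leq G,\ q[K]=H}\mu(K,G)\,|K\cap\ker q|^m$, whereas the paper uses strong induction on $|G|$ applied to the restrictions $q|_D:D\onto H$. Your positivity step is sound, though the sum over all generating tuples of $H$ is unnecessary: one generating $m$-tuple $\mathbf g$ of $G$ already gives $N_q(q(\mathbf g))\geq1$, which is how the paper concludes. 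Also, this counting argument does not sidestep Gasch\"utz's lemma; it is the standard proof of it.
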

\begin{proof}
We induct on $|G|$. The assertion is immediate for the trivial group. Set $K=\ker q$. Every tuple lifting $\mathbf h$ generates a subgroup $D\leq G$ with $DK=G$. For such a subgroup, $q_D=q|_D:D\onto H$ is onto. Partitioning the $|K|^m$ lifts according to the subgroup they generate gives
\begin{equation}\label{eq:lift-recursion}
 |K|^m=\sum_{\substack{D\leq G\\DK=G}}N_{q_D}(\mathbf h),\qquad
 N_q(\mathbf h)=|K|^m-\sum_{\substack{D<G\\DK=G}}N_{q_D}(\mathbf h).
\end{equation}
The induction hypothesis makes each term in the last sum independent of $\mathbf h$, proving the first assertion. If $m\geq d(G)$, choose a generating $m$-tuple of $G$, padding with identities when needed. Its image is a generating tuple of $H$ with a generating lift. The common value of $N_q$ is therefore positive.
\end{proof}

\begin{definition}\label{def:cone}
A finite cone consists of a nonempty finite free right $G_0$-set $U_0$,
its \emph{root}, and, for some $d\in\N$, regular affine maps
$p_j:U_0\to U_j$, $1\leq j\leq d$, to named finite free right
$G_j$-sets. Its \emph{full graph} is
\begin{equation}\label{eq:full-cone-graph}
 \Gamma=\{(u,p_1(u),\ldots,p_d(u)):u\in U_0\}.
\end{equation}
We identify $\Gamma$ with $U_0$ through the root coordinate and transport the right $G_0$-action to it. A map between two cone graphs is regular affine if its map between the roots is regular affine under these identifications.
\end{definition}

For finitely many such graphs $\Gamma_D$ with root groups $G_D$, a \emph{compatible group} consists of a finite group $G_*$ and epimorphisms $q_D:G_*\onto G_D$ such that
\begin{equation}\label{eq:compatible-group-cone}
 q_B=q_fq_D
 \quad\text{for every specified map }f:\Gamma_D\to\Gamma_B,
\end{equation}
where $q_f:G_D\onto G_B$ is the homomorphism associated with $f$. The group $G_*$ acts on each graph through $q_D$.

We use two kinds of finite extensions: a finite product of named cone graphs, or a fibre product
\begin{equation}\label{eq:cone-pullback}
 \Lambda=\Gamma_{D_0}\times_{\Gamma_B}\Gamma_{D_1}
 =\{(z_0,z_1):f_0(z_0)=f_1(z_1)\},
 \qquad f_i:\Gamma_{D_i}\onto\Gamma_B\quad(i=0,1),
\end{equation}
where both $f_i$ are regular affine. The entire base graph and both maps are part of the data. For a product there are no identifications between its named factors; in \eqref{eq:cone-pullback} only the displayed equality is imposed. A compatible group acts coordinatewise on these sets. In the fibre-product case, \eqref{eq:compatible-group-cone} gives $q_B=q_{f_i}q_{D_i}$ for $i=0,1$, so both maps are equivariant for these actions. Further extensions use the resulting cone graphs as their parents.

\begin{lemma}\label{lem:birth}
Let $\Lambda$ be a finite product of cone graphs or a fibre product as in \eqref{eq:cone-pullback}, and let $G_*$ be a compatible finite group. For a finite set $E$ with $|E|\geq2$, the set
\begin{equation}\label{eq:birth}
 W_*=E\times G_*\times\Lambda
\end{equation}
has a free right $G_*$-action and onto regular affine projections to all displayed roots. Every specified equation commutes as an equality of evaluated tables.
\end{lemma}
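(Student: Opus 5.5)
The plan is to define the action explicitly and then write each projection in the form \eqref{eq:affine}. Let $G_*$ act on $W_*=E\times G_*\times\Lambda$ by
\[
 (e,g,\lambda)\cdot r=(e,gr,\lambda\cdot r),
\]
where $\lambda\cdot r$ is the coordinatewise action through the epimorphisms $q_D$. This action is free because the middle coordinate is the right regular action. Its orbit representatives may be taken as $(e,1,\lambda)$, so $W_*\cong(E\times\Lambda)\times G_*$, with orbit index $i=(e,\lambda)$ and group coordinate $g$. In these coordinates the triple $(e,g,\lambda\cdot g)$ corresponds to $(i,g)$.

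Each displayed root is the root of some named graph $\Gamma_D$ occurring as a factor of $\Lambda$; for a fibre product this includes the base $\Gamma_B$, reached via $f_0$. Write $\Gamma_D=I_D\times G_D$ with its chosen representatives. The projection
\[
 \pi_D:(e,g,\lambda)\longmapsto \lambda_D
\]
is obtained by composing with the coordinate map to $\Gamma_D$ and, if needed, with $f_0$. In orbit coordinates, write $\lambda_D=(j(\lambda),h(\lambda))$. Equivariance then gives
\[
 \pi_D(i,g)=(\lambda\cdot g)_D=\bigl(j(\lambda),\,h(\lambda)q_D(g)\bigr).
\]
This has the form \eqref{eq:affine} with $\varphi(e,\lambda)=j(\lambda)$, $c_i=h(\lambda)$ and $q=q_D$, which is onto by assumption. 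For the base, the associated homomorphism is $q_{f_0}q_D=q_B$, by \eqref{eq:compatible-group-cone}. The projection to $U_0$ of a named cone, and then to its $U_j$, is a composite of regular affine maps, so Lemma~\ref{lem:affine} covers the remaining roots.

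The main obstacle is to show that $\varphi$ is onto with constant positive fibres. Equivalently, the number of pairs $(e,\lambda)$ with $\lambda_D$ in a given $G_D$-orbit must be independent of that orbit. For a product, $\Lambda\to\Gamma_D$ is a product projection, so each fibre has size $|E|\cdot\prod_{D'\ne D}|\Gamma_{D'}|$ over every point. Orbits of $\Gamma_D$ all have size $|G_D|$, so each orbit receives the same number of orbits of $W_*$.

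For a fibre product, the fibre of $\Lambda\to\Gamma_{D_0}$ over $z_0$ is $f_1^{-1}(f_0(z_0))$. Its size is constant because $f_1$ is regular affine: fibres of $\varphi_1$ are constant, and $q_{f_1}$ is an epimorphism, so all its fibres have size $|\ker q_{f_1}|$. The same argument applies over $\Gamma_{D_1}$ and over $\Gamma_B$, where the fibre size is the product of the two. Nonemptiness follows from surjectivity of $f_0$ and $f_1$. Since all fibres over points are constant and positive, fibre sizes over orbits are constant as well, and $\varphi$ has constant positive fibres.

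Finally, commutation holds because every specified equation $f\circ\pi_D=\pi_B$ is an identity of coordinate maps on $\Lambda$. Either $f$ is a cone map $p_j$, which is built into the full graph \eqref{eq:full-cone-graph}, or $f=f_i$, which is the defining equation of \eqref{eq:cone-pullback}. Both sides agree pointwise, so the evaluated tables coincide. The role of $|E|\geq2$ here is only to record free extra orbit coordinates for later use.
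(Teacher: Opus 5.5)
Your proposal is correct and follows essentially the paper's argument: the diagonal free action, the affine form forced by $q_D$-equivariance, constant positive point-fibres of $\Lambda$ over every displayed root (product projections, or pullbacks of regular affine maps), and the specified equations holding pointwise. The differences are cosmetic. You count orbit fibres by indexing the orbits of $W_*$ by $E\times\Lambda$, while the paper observes that each source orbit contributes $|\ker q|$ points over each target point; and you leave implicit that the coordinatewise action preserves $\Lambda$, which follows from $q_B=q_{f_i}q_{D_i}$ as noted before the lemma.
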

\begin{proof}
Products of finite nonempty sets have onto constant-fibre projections. If $f_1$ in \eqref{eq:cone-pullback} is $d$-to-one, then $\Lambda\to\Gamma_{D_0}$ is $d$-to-one; the analogous statement holds for the other projection. Composing either projection with its map to $\Gamma_B$ gives an onto constant-fibre map to the base. The root coordinates therefore give onto constant-fibre maps from $\Lambda$ to every displayed root. Compatibility supplies an action of $G_*$ on $\Lambda$ preserving its defining equation. Put
\begin{equation}\label{eq:birth-action}
 (e,g,z)\cdot h=(e,gh,z\cdot h)
 \quad((e,g,z)\in W_*,\ h\in G_*).
\end{equation}
This action is free. A projection to an old root is equivariant for its specified epimorphism $q:G_*\onto G$. In free-action coordinates it therefore has form \eqref{eq:affine}. For a fixed target orbit, each source orbit mapping onto it contributes exactly $|\ker q|$ points to the fibre over each of its points. Thus constant total fibre size implies a constant number of source orbits over each target orbit. The induced map on orbit sets consequently has constant fibres. The pullback equations hold pointwise by construction.
\end{proof}

For word sets of a common length $L$, we realize the new root on a new finite alphabet. The letter at position $0\leq r<L$ of the word named by $(e,g,z)$ records
\begin{equation}\label{eq:birth-alphabet}
 (r,e,g,(w_j(r))_j),
\end{equation}
where $(w_j)_j$ are the old root words recorded by $z$. The phase $r$ makes these words uniquely readable. Coordinate projection realizes each old root word, and the displayed group action is transported to the new word set. For an initial word system, take $\Lambda$ to be a singleton. The action fixes $e$, so every new root has at least two free orbits.

\subsection{Regular refinement and continuation}
For finite nonempty sets $K,I$ with $|I|\mid|K|$, define
\[
 \Bal(K,I)=\{h:K\to I:|h^{-1}\{i\}|=|K|/|I|\text{ for all }i\in I\}.
\]
The next refinement has two roles: balanced maps control the orbit fibres, and generating tuples force every map that respects the refinement to induce one group homomorphism.

Let $F_m$ be the abstract free group on $z_1,\ldots,z_m$. Write $\Epi(F_m,G)$ for the set of epimorphisms, identified with the generating $m$-tuples of $G$. For a fixed finite family of sets $I\times G$ with free right actions, choose $m\geq\max\{1,d(G):G\text{ occurs in the family}\}$ and a finite nonempty set $K$ whose cardinality is divisible by every $|I|$ in the family. Set
\begin{equation}\label{eq:label-set}
 \mathcal L_{I,G}=\Bal(K,I)\times\Epi(F_m,G)\times G^K.
\end{equation}
For \eqref{eq:affine}, define
\begin{equation}\label{eq:label-map}
 \widehat p(h,\lambda,a)=\bigl(\varphi h,q\lambda,
 (c_{h(k)}q(a_k))_{k\in K}\bigr).
\end{equation}
The label set carries the free right action
\begin{equation}\label{eq:label-action}
 (h,\lambda,(a_k)_k)\cdot r
 =\bigl(h,\Inn(r^{-1})\lambda,(a_kr)_k\bigr),
 \qquad \Inn(r^{-1})(g)=r^{-1}gr\quad(r,g\in G).
\end{equation}
For $k\in K$ and $w\in F_m$, define the evaluation coordinates by
\begin{equation}\label{eq:realization}
 H_{(h,\lambda,a)}^{k,w}=(h(k),a_k\lambda(w)),\qquad k\in K,\ w\in F_m.
\end{equation}
We will restrict this evaluation map to $K\times\Omega$ for a finite set $\Omega\subseteq F_m$.

\begin{lemma}\label{lem:cover}
For the fixed finite family, one can compute a finite $\Omega\subseteq F_m$ containing the identity such that:
\begin{enumerate}
\item every regular affine map has an onto, constant-fibre label map \eqref{eq:label-map};
\item each label's realization map $K\times\Omega\to I\times G$ is onto;
\item if a set map $f:I\times G\to J\times H$ sends the realization of a source label to that of a target label at every coordinate, then $f$ is regular affine;
\item the realizations distinguish distinct labels of each set with its given free action.
\end{enumerate}
Moreover, for every regular affine map
$p:I\times G\to J\times H$ with associated epimorphism $q$, every source
label $\ell\in\mathcal L_{I,G}$, every $r\in G$, and every
$(k,w)\in K\times\Omega$,
\begin{equation}\label{eq:realization-identities}
 H_{\widehat p(\ell)}^{k,w}=p(H_\ell^{k,w}),\qquad
 H_{\ell\cdot r}^{k,w}=H_\ell^{k,w}\cdot r,\qquad
 \widehat p(\ell\cdot r)=\widehat p(\ell)\cdot q(r).
\end{equation}
\end{lemma}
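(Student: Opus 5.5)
The plan is to take $\Omega$ to be a ball in the free group. Let $N$ be at least the largest order $|G|$ of a group in the family. Every epimorphism $\lambda\in\Epi(F_m,G)$ then reaches each element of $G$ by a word of length at most $N$ in $z_1^{\pm1},\dots,z_m^{\pm1}$, since the Cayley graph of $G$ with respect to $\lambda(z_1),\dots,\lambda(z_m)$ has diameter less than $|G|$. I would set $\Omega$ to be all reduced words of length at most $2N$. This set is finite, explicitly computable, and contains the identity and the generators $z_i$. The three identities \eqref{eq:realization-identities} are direct computations from \eqref{eq:label-map}, \eqref{eq:label-action} and \eqref{eq:realization}, using the right equivariance $p(i,gr)=p(i,g)q(r)$. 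For example,
\[
 H^{k,w}_{\ell\cdot r}=(h(k),a_kr\,r^{-1}\lambda(w)r)=H^{k,w}_\ell\cdot r,
\]
which explains the conjugation in \eqref{eq:label-action}. I would dispose of these first.

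For (2), a balanced $h$ is onto $I$, and $a_k\lambda(w)$ ranges over all of $G$ as $w$ ranges over words of length at most $N$. For (4), the coordinates with $w=1$ recover $h(k)$ and $a_k$. The coordinates with $w=z_i$ then recover $\lambda(z_i)=a_k^{-1}\pr_G H^{k,z_i}$, and these values determine $\lambda$.

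For (1), the label map $\widehat p$ lands in $\mathcal L_{J,H}$. Indeed, $\varphi h$ is balanced because $\varphi$ has constant fibres, and $q\lambda$ is an epimorphism. Fix a target label $(h',\mu,b)$ and count its preimages coordinatewise.
\begin{itemize}
\item The number of balanced $h$ with $\varphi h=h'$ is independent of $h'$. We choose, inside each fibre $h'^{-1}(j)$, a balanced partition among the $|I|/|J|$ points of $\varphi^{-1}(j)$, and the count of such partitions is a fixed multinomial product.
\item The number of $\lambda$ with $q\lambda=\mu$ is $N_q(\mu)$. By Lemma~\ref{lem:lifts} this is independent of $\mu$ and positive since $m\geq d(G)$.
\item Once $h$ is fixed, the number of $a$ with $c_{h(k)}q(a_k)=b_k$ for all $k$ is $|\ker q|^{|K|}$.
\end{itemize}
The fibre size is the product of these three counts, which is constant and positive, so $\widehat p$ is onto with constant fibres.

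The main obstacle is (3). Suppose $f(H^{k,w}_\ell)=H^{k,w}_{\ell'}$ for all $(k,w)$, with $\ell=(h,\lambda,a)$ and $\ell'=(h',\mu,b)$.
\begin{enumerate}
\item Define $q(\lambda(w))=\mu(w)$ for $w$ of length at most $N$. This is well defined because $f$ is a function: if $\lambda(w)=\lambda(w')$, then the two coordinates $(k,w)$ and $(k,w')$ name the same source point, so $b_k\mu(w)=b_k\mu(w')$.
\item To see that $q$ is multiplicative, take $w,w'$ of length at most $N$ and choose $w''$ of length at most $N$ with $\lambda(w'')=\lambda(ww')$. Since $ww'\in\Omega$, the same argument gives $\mu(ww')=\mu(w'')$. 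Hence $q$ is a homomorphism, and it is onto because $\mu$ is.
\item Define $\psi(i)=h'(k)$ and $c_i=b_kq(a_k)^{-1}$ for any $k$ with $h(k)=i$. Writing each $g$ as $a_k\lambda(w)$ shows that $f(i,g)=(\psi(i),c_iq(g))$, and this formula is independent of $k$ since $f$ is a function.
\item Finally, $\psi h=h'$ with both $h$ and $h'$ balanced. Counting $|h'^{-1}(j)|$ gives
\[
 |\psi^{-1}(j)|\cdot|K|/|I|=|K|/|J|,
\]
so $\psi$ is onto with constant fibres $|I|/|J|$.
\end{enumerate}
Thus $f$ is regular affine.
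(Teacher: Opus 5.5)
Your proof is correct, but you choose $\Omega$ and prove (3) differently from the paper.

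The paper builds $\Omega$ adaptively. It includes a representative word $w_{\lambda,g}$ with $\lambda(w_{\lambda,g})=g$ for each epimorphism and group element. It also includes, for every set map $f$ in the family and every unequal pair of functions $w\mapsto f(i,a\lambda(w))$ and $w\mapsto (j,b\mu(w))$, one disagreement word, found by breadth-first search in the finite subgroup $\langle(\lambda(z_s),\mu(z_s))\rangle$ of $G\times H$. Agreement on $\Omega$ then propagates to all of $F_m$. Comparing $w\in\ker\lambda$ with $w=1$ gives $\ker\lambda\subseteq\ker\mu$, hence $\mu=q\lambda$.

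You instead take the ball of radius $2N$ and factor $\mu$ through $\lambda$ directly on it. Well-definedness of $q$ comes from $f$ being a function. Multiplicativity comes from the fact that a product of two words of length at most $N$ stays in $\Omega$, and such words already reach every element of $G$.

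What each approach buys:
\begin{itemize}
\item Your $\Omega$ depends only on $m$ and a bound on the group orders. It needs no enumeration of set maps and no search for disagreement words.
\item Your proof of (4) is immediate, since the generators $z_i$ lie in $\Omega$.
\item The paper's $\Omega$ can be much smaller, since your ball has exponentially many words in $N$. Its argument also gives the stronger intermediate fact that the realization identity holds on all of $F_m$.
\end{itemize}
For the later construction only finiteness and computability of $\Omega$ matter, so either choice works. Your counting argument for (1) and your verification of the three realization identities are the same as the paper's.
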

\begin{proof}
If $\varphi$ is $d$-to-one and $N=|K|/|I|$, a balanced target map has
\[
 \left(\frac{(dN)!}{(N!)^d}\right)^{|J|}
\]
balanced lifts. The generating-tuple lift count is positive and constant by Lemma~\ref{lem:lifts}, and the translation table has $|\ker q|^{|K|}$ lifts. Multiplying the three counts proves the first assertion. Substitution in \eqref{eq:label-map}--\eqref{eq:realization} proves \eqref{eq:realization-identities}; in particular $q\Inn(r^{-1})=\Inn(q(r)^{-1})q$. Formula~\eqref{eq:label-action} is a right action, and is free since $a_kr=a_k$ implies $r=1$.

For each epimorphism $\lambda:F_m\onto G$ in the fixed family and each $g\in G$, include a word $w_{\lambda,g}$ with $\lambda(w_{\lambda,g})=g$. Also consider all pairs of functions on $F_m$ of the forms
\begin{equation}\label{eq:test-functions}
 w\longmapsto f(i,a\lambda(w)),\qquad
 w\longmapsto (j,b\mu(w)),
\end{equation}
where $f$ is any set map between the given sets with free actions and all other entries range over their finite possibilities. Include one disagreement word for each unequal pair. This is a finite computable procedure: generate the finite subgroup
\[
 \langle(\lambda(z_s),\mu(z_s)):1\leq s\leq m\rangle\leq G\times H
\]
by breadth-first search, and test $f(i,ag)=(j,bh)$ on each element $(g,h)$ of this subgroup. Equality on all of them is equality of the two functions; otherwise the search supplies a disagreement word. Include the identity word and, if necessary, witnesses distinguishing two evaluation maps on $K\times F_m$. Such witnesses are obtained by the same finite procedure, or by applying it with $f=\id$.

For a given label $(h,\lambda,a)$ and point $(i,g)$, choose $k$ with $h(k)=i$ and then $w_{\lambda,a_k^{-1}g}$. This proves surjectivity of the evaluation map. For a source label $(h,\lambda,a)$ and a target label $(h',\mu,b)$, where $b=(b_k)_{k\in K}$, suppose that
\begin{equation}\label{eq:label-rigidity}
 f(h(k),a_k\lambda(w))=(h'(k),b_k\mu(w))
\end{equation}
for every $k\in K,w\in\Omega$. Our choice of witnesses extends this identity to all $w\in F_m$. Since $\lambda$ is onto, the target orbit coordinate is constant on each source orbit. Taking $w\in\ker\lambda$ and comparing with $w=1$ gives $\ker\lambda\subseteq\ker\mu$. Thus $\mu=q\lambda$ for a single epimorphism $q:G\onto H$. Equation~\eqref{eq:label-rigidity} then becomes
\[
 f(h(k),g)=(h'(k),b_kq(a_k)^{-1}q(g))
 \qquad(k\in K,\ g\in G).
\]
Repeated values of $h(k)$ give a well-defined map on orbit sets $\varphi$
and translations $c_i$. Balance of $h,h'$ implies, for every $j\in J$,
\[
 |\varphi^{-1}\{j\}|\,\frac{|K|}{|I|}=\frac{|K|}{|J|},
\]
so $\varphi$ is onto with constant fibres. This proves the third assertion. Finally the evaluation map on $K\times F_m$ recovers $h$ from first coordinates, every $a_k$ at $w=1$, and then $\lambda$ by cancellation. Our finite witnesses preserve these distinctions on $\Omega$, proving the fourth assertion.
\end{proof}

\begin{lemma}\label{lem:continuation}
The label operation respects identities and composition. After labels are realized as distinct words using a common sequence of realization coordinates, it gives a uniquely specified continuation of every retained table. Continuation preserves regularity, surjectivity, the associated group homomorphism, and all equalities between composites of retained tables.
\end{lemma}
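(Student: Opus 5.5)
Functoriality of the label operation comes first: I will check $\widehat{\id}=\id$ and $\widehat{r\circ p}=\widehat r\circ\widehat p$ directly from \eqref{eq:label-map}. For $p(i,g)=(\varphi(i),c_iq(g))$ and $r(j,h)=(\psi(j),d_js(h))$, substituting a label $(h,\lambda,a)$ gives first coordinate $\psi\varphi h$ and second coordinate $sq\lambda$. The translation coordinate is $d_{\varphi h(k)}s(c_{h(k)}q(a_k))=d_{\varphi h(k)}s(c_{h(k)})\,(sq)(a_k)$, which is exactly the coefficient in \eqref{eq:affine-composition} evaluated at $h(k)$. Identities are immediate since $\varphi=\id$, $q=\id$ and $c_i=1$. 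By Lemma~\ref{lem:affine}, the regular affine map $r\circ p$ has translations and associated homomorphism determined by its table, so $\widehat{r\circ p}$ is well defined from the composite table alone. Hence the hat operation depends only on evaluated tables and respects composition.

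Next comes the realization step. I fix one ordering of the common coordinate set $K\times\Omega$ from Lemma~\ref{lem:cover}, for the fixed finite family. Each label $\ell$ is realized as the word $(H_\ell^{k,w})_{(k,w)}$, whose letters lie in the old set $I\times G$, possibly combined with the old realizing words via the phase construction \eqref{eq:birth-alphabet}. Lemma~\ref{lem:cover}(4) makes these words distinct, so the label set is identified with a word set. The continuation of a retained table $p$ is then defined as $\widehat p$ transported to words. The first identity of \eqref{eq:realization-identities}, $H_{\widehat p(\ell)}^{k,w}=p(H_\ell^{k,w})$, says this continuation is obtained by applying $p$ letterwise to the realizing word. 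This letterwise description characterizes the continuation uniquely: any table on labels with that letterwise property must send $\ell$ to the unique label realized by $p(H_\ell)$, and uniqueness follows from distinctness. Uniqueness in turn gives compatibility with composition, because letterwise application of $r\circ p$ equals letterwise application of $p$ followed by $r$.

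Preservation properties then follow. Surjectivity and constant fibres come from Lemma~\ref{lem:cover}(1). Regularity of $\widehat p$ as a map of free sets follows from the third identity in \eqref{eq:realization-identities}, $\widehat p(\ell\cdot r)=\widehat p(\ell)\cdot q(r)$, together with Lemma~\ref{lem:lifts} counting. I also need that the induced orbit map is onto with constant fibres. I expect this to follow from the constant-fibre count already present in Lemma~\ref{lem:cover}(1), exactly as at the end of the proof of Lemma~\ref{lem:birth}. Since $\widehat p$ is equivariant along $q$, its associated homomorphism is $q$, which shows the associated group homomorphism is preserved. Finally, equalities between composites of retained tables are preserved: if two composites agree as evaluated tables, functoriality shows their continuations are continuations of the same table, hence equal.

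The main obstacle is the bookkeeping that makes continuation depend only on tables and not on the chosen free-action coordinates. Orbit representatives are fixed by the least-point convention, and Lemma~\ref{lem:affine} recovers $(\varphi,c_i,q)$ from the table. I must check that reindexing the representatives changes $(h,\lambda,a)$ by a consistent relabeling that commutes with the hat operation. If that fails, I would instead define continuation intrinsically as letterwise application on realizing words, where it is coordinate-free.
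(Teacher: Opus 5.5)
Your proposal is correct and follows essentially the same route as the paper. You check $\widehat{r\circ p}=\widehat r\circ\widehat p$ and $\widehat{\id}=\id$ directly from \eqref{eq:label-map}, then use Lemma~\ref{lem:cover} and \eqref{eq:realization-identities}, transported through the label-to-word bijection, for uniqueness, surjectivity, regularity, the associated homomorphism and equalities of composites. Your argument also works verbatim for any common sample sequence of coordinates, not just one ordering of $K\times\Omega$. The coordinate-dependence you flag as the main obstacle is not a real one: orbit representatives are fixed by the least-point convention, and changing representatives alters only the translations $c_i$, not $\varphi$ or $q$.
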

\begin{proof}
For composable $p,r$ as in \eqref{eq:affine-composition}, fix a source
label $(\chi,\lambda,a)\in\mathcal L_{I,G}$ and $k\in K$. The $k$th
translation coordinate of $\widehat r\widehat p(\chi,\lambda,a)$ is
\[
 d_{\varphi(\chi(k))}\,s(c_{\chi(k)})\,(sq)(a_k),
\]
which is the translation coordinate of $\widehat{rp}$. The balanced-map and generating-tuple coordinates also agree. Thus $\widehat{rp}=\widehat r\widehat p$, and $\widehat{\id}=\id$. Lemma~\ref{lem:cover} and \eqref{eq:realization-identities} give the remaining assertions. Passing through a label-to-word bijection preserves these identities.
\end{proof}

At each level, the current cone graph is the graph of the continued maps. We form new products and fibre products from these current graphs; Lemma~\ref{lem:continuation} preserves the resulting projections and equations thereafter.

\section{A common family of subshifts}\label{sec:symbolic}

We now choose words on a common length scale so that the aligned factors between the resulting subshifts are precisely the maps induced by regular affine tables.

\subsection{The simultaneous construction}

Fix effective encodings of the finite groups, sets, maps, words, and
derivations used below. A word system has an index $u$, an initial level
$b(u)$, a finite group $G_u$, a finite alphabet $A_u$, and word sets
\[
 W_{u,n}\subseteq A_u^{L_n}\qquad(n\geq b(u))
\]
with specified free right $G_u$-actions. Set $L_0=3$ and require
$L_n\mid L_{n+1}$ and $L_{n+1}\geq6L_n$. Thus $3\mid L_n$ for all
$n$. A system is \emph{active} at level $n$ if $b(u)\leq n$. For
active $u,v$, write $\mathcal A_n(u,v)$ for all regular affine maps
$W_{u,n}\to W_{v,n}$, with the orbit representatives fixed earlier.

A finite condition graph is the full graph of a cone as in Definition~\ref{def:cone}, with construction word sets as its root and targets. Its construction record consists of an initial
construction and subsequent continuation steps. At any later level,
its current graph is the graph of its continued root-to-coordinate
tables. The continuations of Lemma~\ref{lem:continuation} preserve
their associated group homomorphisms and commuting equations.

An extension request specifies either a seed $(G,e)$, where $G$ is a finite group and $e\geq2$ is the prescribed number of orbits in the initial free $G$-set, or one of the two extensions of Section~\ref{sec:finite}: a finite product of named condition graphs, or a single fibre product over a named condition graph with two specified regular affine maps. A product or fibre-product request also specifies a compatible finite group and a set $E$ with $|E|\geq2$, as in Lemma~\ref{lem:birth}. Each fibre-product request includes its full base graph and both maps; repeated named copies remain separate apart from the specified equality. At execution, the parent root-to-coordinate maps and, in the fibre-product case, the two specified maps are first continued to the current level. We then form the product or fibre product of the current graphs and apply Lemma~\ref{lem:birth}. The construction record stores the new finite set and its projection tables. Subsequent requests use the resulting cone graphs and their continued maps.

Assign code $0$ to the trivial-group seed with two orbit representatives.
At level $n$, repeatedly execute the least unexecuted valid request of
code at most $n$. A request is considered only after every level named
in its parent conditions, tables, and continuation records is at most
$n$. Validity then requires that its parents have been introduced, its
tables have the indicated types, and its continued diagram has the
indicated types and equations, including the compatibility equations for
the specified group epimorphisms. These are finite tests. For a seed
$(G,e)$, define $w_{i,g}(r)=(r,i,g)$ for $0\leq i<e$, $g\in G$, and
$0\leq r<L_n$; the seed word set is $\{w_{i,g}:0\leq i<e,\ g\in G\}$,
with $w_{i,g}\cdot h=w_{i,gh}$ for $h\in G$.
Other requests use Lemma~\ref{lem:birth} at length $L_n$. The alphabet
is fixed when the system is introduced. The position coordinate makes
these initial words uniquely readable.

There are finitely many codes below the cutoff, so this procedure
terminates at each level. It is also fair: once the parents and named
levels of a valid request are available, continuation preserves its
typing and equations, while its compatible group and the specified
epimorphisms are unchanged. The request is
executed once its code is below the cutoff and these prerequisites have
been met. Induction on its finite derivation proves this for every
valid request.

After executing these requests, apply Lemma~\ref{lem:cover} to all
active word sets and all their regular affine maps, with common choices
of $K,m,\Omega$. Put $\mathcal H_n=K\times\Omega$. For a refinement
label $\xi$ of system $u$, let
\[
 X_{u,\xi}:\mathcal H_n\longrightarrow W_{u,n}
\]
be its evaluation map. These maps are onto and distinguish the labels.
For $\ell\geq6$ and $H_0,\ldots,H_{\ell-1}\in\mathcal H_n$, form
the candidate words
\begin{equation}\label{eq:next-word}
 w_{u,\xi}=X_{u,\xi}(H_0)\cdots X_{u,\xi}(H_{\ell-1}).
\end{equation}
They are indexed by labels during the search. Write $[x]_j$ for the
$j$th lower block of a candidate $x$, and use indices
$0,\ldots,2\ell-1$ for a concatenation $xy$.

\begin{definition}\label{def:certificate}
A sample $(H_0,\ldots,H_{\ell-1})$ is accepted if the following
conditions hold for all active systems and all the indicated finite
choices.
\begin{enumerate}
\item Every coordinate of $\mathcal H_n$ occurs in the sample.
\item Every ordered pair of lower words occurs in every candidate
upper word of the same system.
\item Within each system, candidates indexed by distinct labels have distinct middle thirds.
\item For every active system $u$ and all candidates $x,y,z$ of $u$,
the word $z$ does not occur at a nonboundary position in $xy$.
\item For every $u,v$, every map $\psi:W_{u,n}^2\to W_{v,n}$,
all source candidates $x,y$, every target candidate $z$, and every
$1\leq c\leq\ell-2$, some $0\leq j<\ell$ satisfies
\begin{equation}\label{eq:cross-offset}
 [z]_j\ne\psi([xy]_{j+c},[xy]_{j+c+1}).
\end{equation}
\end{enumerate}
\end{definition}

We use the following elementary estimate for events depending on
overlapping sets of independent coordinates.

\begin{lemma}\label{lem:independent-supports}
Let $(H_i)_i$ be independent random variables, and let $E_1,\ldots,E_M$
be events with $\Pr(E_j)\leq1-\varepsilon$, where
$0<\varepsilon<1$. Suppose $E_j$ depends only on coordinates in a set
$S_j$, where $s,d$ are positive integers, $|S_j|\leq s$, and each
coordinate belongs to at most $d$
of these sets. Then
\[
 \Pr\left(\bigcap_{j=1}^M E_j\right)
 \leq(1-\varepsilon)^{M/(1+s(d-1))}.
\]
\end{lemma}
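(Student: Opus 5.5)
We will extract a large subfamily of events with pairwise disjoint supports by a greedy procedure, and then use independence on that subfamily. Events with disjoint coordinate sets are independent, so the bound reduces to a counting statement about how many mutually disjoint supports we can guarantee.

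First, build the subfamily. Define a conflict graph on the indices $\{1,\ldots,M\}$ by joining $j\ne j'$ whenever $S_j\cap S_{j'}\ne\varnothing$. Fix $j$. Each of the at most $s$ coordinates in $S_j$ lies in at most $d-1$ other sets. Hence $j$ has at most $s(d-1)$ neighbours in the conflict graph.

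Next, choose the independent set greedily. Pick any remaining index and delete it together with its neighbours, and repeat until no index remains. Each step removes at most $1+s(d-1)$ indices. The procedure therefore produces an independent set $J$ with
\[
 |J|\geq \frac{M}{1+s(d-1)}.
\]

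Then use independence. For $j\in J$ the supports $S_j$ are pairwise disjoint. Each $E_j$ is measurable with respect to the coordinates in $S_j$. Since the $H_i$ are independent, the $\sigma$-algebras generated by disjoint groups of coordinates are independent (the standard grouping lemma). Therefore
\[
 \Pr\Bigl(\bigcap_{j=1}^M E_j\Bigr)\leq \Pr\Bigl(\bigcap_{j\in J}E_j\Bigr)=\prod_{j\in J}\Pr(E_j)\leq(1-\varepsilon)^{|J|}.
\]
Because $0<1-\varepsilon<1$, the function $t\mapsto(1-\varepsilon)^t$ is decreasing. Combining this with the lower bound on $|J|$ gives the claimed inequality.

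No step is a real obstacle. The only points to check are the degree count for the conflict graph and the grouping form of independence. If there are infinitely many coordinates, we first note that each event depends on only finitely many of them, namely those in $S_j$, so only those finitely many coordinates enter the argument. If some $S_j$ is empty, then $E_j$ has probability $0$ or $1$, so it is either the certain event or a null event. A null event gives the bound trivially. A certain event would have $\Pr(E_j)=1$, which contradicts the hypothesis $\Pr(E_j)\leq1-\varepsilon$, so this case does not occur.
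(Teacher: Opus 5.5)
Your proposal is correct and follows essentially the same argument as the paper. Both bound the maximum degree of the intersection graph of the supports by $s(d-1)$, greedily extract at least $M/(1+s(d-1))$ pairwise disjoint supports, and combine independence of the corresponding events with monotonicity of $t\mapsto(1-\varepsilon)^t$.
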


\begin{proof}
The intersection graph of the sets $S_j$ has maximum degree at most
$s(d-1)$. Greedy selection gives at least $M/(1+s(d-1))$ pairwise
disjoint supports. Their events are independent, and their intersection
contains the intersection of all the $E_j$.
\end{proof}

\begin{lemma}\label{lem:termination}
An accepted sample exists at every level. Searching in increasing
length, and then in lexicographic order, computes one from the finite
level data.
\end{lemma}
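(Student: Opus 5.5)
We will prove existence by a probabilistic argument on samples of a fixed large length $\ell$. Computability then follows: each condition of Definition~\ref{def:certificate} is a finite check over finitely many active systems, labels, tables $\psi$ and offsets $c$. So the search in increasing length and lexicographic order halts once some length admits an accepted sample.

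Fix the level-$n$ data. Let $N=|\mathcal H_n|$, and choose $H_0,\ldots,H_{\ell-1}$ independently and uniformly from $\mathcal H_n$.

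For conditions (1) and (2), split the sample into consecutive disjoint windows of length two. Each window realizes a prescribed pair of coordinates with probability $N^{-2}$. By surjectivity of the evaluation maps (Lemma~\ref{lem:cover}), every ordered pair of lower words $(a,b)$ is then produced for a given label $\xi$ with probability at least $N^{-2}$ per window. Hence the failure probability is at most $(1-N^{-2})^{\lfloor\ell/2\rfloor}$ per (label, pair). There are finitely many labels, so the union bound tends to $0$; condition (1) is the same argument with windows of length one.

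For (3), take distinct labels $\xi\ne\xi'$ of one system. Lemma~\ref{lem:cover}(4) gives a coordinate $H$ with $X_{u,\xi}(H)\ne X_{u,\xi'}(H)$. Each middle-third position independently hits such an $H$ with probability at least $1/N$, so equality of the middle thirds has probability at most $(1-1/N)^{\ell/3}$.

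For (4) and (5), each bad event asks for equalities at all $j<\ell$. In (4), a nonboundary occurrence of $z$ in $xy$ at offset $c$ with $1\le c\le\ell-1$ is itself a block-level condition: a lower $W_n$-word occurs at a nonboundary position of two lower words unless the offset is a multiple of $L_n$, and unique readability at level $n$ handles the non-multiple offsets. So it suffices to bound block equalities $[z]_j=[xy]_{j+c}$, and (4) becomes a special case of (5) with $\psi$ the first projection.

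For fixed $(u,v,\psi,\xi,\eta,\zeta,c)$, with $x,y,z$ the candidates of labels $\xi,\eta,\zeta$, define events $E_j$ for $j<\ell$ by $E_j=\{[z]_j=\psi([xy]_{j+c},[xy]_{j+c+1})\}$. The event $E_j$ depends on the coordinates $H_j$, $H_{j+c}$ and $H_{j+c+1}$, reduced mod $\ell$ since $xy$ uses the same sample twice. So $s\le3$, each coordinate lies in at most $3$ supports, and Lemma~\ref{lem:independent-supports} applies with $d=3$ once we show $\Pr(E_j)\le1-\varepsilon$ uniformly.

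\textbf{Main obstacle.} The key point is that $H_j$ is distinct from $H_{j+c},H_{j+c+1}$ modulo $\ell$ except for boundedly many $j$, using $1\le c\le\ell-2$. Conditioning on the other coordinates, $[z]_j=X_{u,\zeta}(H_j)$ ranges over all of $W_{v,n}$ by surjectivity, since $|W_{v,n}|\ge2$. Hence $\Pr(E_j)\le1-1/N$.

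Discarding the at most two exceptional $j$, the lemma gives failure probability at most $(1-1/N)^{(\ell-2)/7}$. The number of tuples $(u,v,\psi,\xi,\eta,\zeta)$ is bounded independently of $\ell$, and there are at most $\ell$ offsets $c$. The total failure probability is therefore $O(\ell)(1-1/N)^{\ell/7}+o(1)\to0$, so for large $\ell$ an accepted sample exists.
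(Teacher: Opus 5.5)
Your proof is correct and follows the paper's argument essentially step for step: independent uniform samples, disjoint windows for conditions 1--2, a disagreement coordinate for condition 3, level-$n$ readability for offsets not divisible by $L_n$, Lemma~\ref{lem:independent-supports} with supports of size three, and a union bound that loses one factor of $\ell$. One small repair is needed: condition 5 only covers $1\leq c\leq\ell-2$, so block offset $\ell-1$ in condition 4 is not a case of the first projection, but it is the case $c=\ell-2$ of condition 5 with $\psi$ the second projection (and the middle third contains only about $\lfloor\ell/3\rfloor-2$ whole lower blocks rather than $\ell/3$, which does not affect the estimate).
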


\begin{proof}
Fix $n$ and a prospective value of $\ell$, put $r=|\mathcal H_n|$, and
choose $(H_i)_{0\leq i<\ell}$ independently and uniformly from
$\mathcal H_n$. All word sets, labels, and finite
maps are fixed independently of $\ell$. By surjectivity of every evaluation
map,
\begin{equation}\label{eq:positive-support}
 p_n=\min \Pr(X_{u,\xi}(H_0)=w)>0,
\end{equation}
where the minimum ranges over active systems $u$, refinement labels $\xi$
of $u$, and words $w\in W_{u,n}$. This is a finite nonempty family.
The variables $H_i$ are identically distributed. Since each word set has at
least two elements, no point probability of $X_{u,\xi}(H_i)$ exceeds
$1-p_n$ for any $0\leq i<\ell$.

For condition 1, the probability of omitting one coordinate is
$(1-r^{-1})^\ell$. For condition 2, inspect the disjoint sample pairs
$(H_0,H_1),(H_2,H_3),\ldots$. A specified ordered pair in one label
coordinate has probability at least $p_n^2$ at each trial, so its
absence probability is at most
$(1-p_n^2)^{\lfloor\ell/2\rfloor}$. For condition 3, distinct labels
have different evaluation maps by Lemma~\ref{lem:cover}, and therefore
disagree at a uniform sample coordinate with probability at least
$r^{-1}$. Their middle thirds contain at least
$\lfloor\ell/3\rfloor-2$ complete lower blocks. For $\ell\geq9$,
the probability of agreement of these middle thirds is consequently at
most $(1-r^{-1})^{\lfloor\ell/3\rfloor-2}$.

For conditions 4 and 5, fix the system indices, tables, candidate
labels, and an offset $c$. In condition 4, let $\xi,\eta,\zeta$ be the
labels of $x,y,z$, respectively. An occurrence of $z$ in $xy$ at the
offset $cL_n$, where $1\leq c\leq\ell-1$, would require
\[
 X_{u,\zeta}(H_j)=
 \begin{cases}
 X_{u,\xi}(H_{j+c}),&j+c<\ell,\\
 X_{u,\eta}(H_{j+c-\ell}),&j+c\geq\ell,
 \end{cases}
 \qquad 0\leq j<\ell.
\]
Thus equality at lower block $j$ depends on the two sample indices
$j$ and $j+c\pmod\ell$. They are distinct,
and each index occurs in at most two of these supports, including when
$2c=0\pmod\ell$. Conditional on the source coordinate, the target
equality has probability at most $1-p_n$ by
\eqref{eq:positive-support}. Lemma~\ref{lem:independent-supports},
with $(M,s,d)=(\ell,2,2)$, bounds the probability of the proposed
occurrence by $(1-p_n)^{\ell/3}$. If the symbol offset is not divisible
by $L_n$, its first lower block would occur at a nonboundary position
in a pair of lower words, contradicting the readability already
established at level $n$.

For condition 5, the source label is likewise chosen according to
which side of the seam in $xy$ contains the relevant block. The
forbidden equality at $j$ has support
\begin{equation}\label{eq:raw-support}
 \{j,\ j+c\pmod\ell,\ j+c+1\pmod\ell\}.
\end{equation}
These three indices are distinct for $1\leq c\leq\ell-2$, and
each index belongs to at most three supports. After conditioning on
the two source coordinates, the independent target coordinate again
bounds equality by $1-p_n$. Applying
Lemma~\ref{lem:independent-supports} with
$(M,s,d)=(\ell,3,3)$ gives $(1-p_n)^{\ell/7}$.

The numbers of coordinates, label pairs, word pairs, and finite tables
in these tests are fixed at level $n$. Only the offset contributes a
factor proportional to $\ell$. A union bound therefore gives constants
$C_n<\infty$ and $\varepsilon_n>0$, independent of $\ell$, such that
the total failure probability, for all sufficiently large $\ell$, is
at most
\[
 C_n\ell(1-\varepsilon_n)^{\ell/7}.
\]
For example, take $\varepsilon_n$ no larger than
$\min\{r^{-1},p_n^2\}$ and absorb the bounded loss in the middle-third
exponent into $C_n$, using
$\lfloor\ell/3\rfloor-2\geq\ell/7-3$. This bound tends to zero. Hence an accepted sample
exists. Every acceptance condition is finite and decidable, so the
stated search terminates.
\end{proof}

Use the first accepted sample to define $L_{n+1}=\ell L_n$ and
$W_{u,n+1}$ by \eqref{eq:next-word}. Condition 1 and separation of
the evaluation maps make the label-to-word map injective. Transport
the free label action and the label maps of Lemma~\ref{lem:cover}
through this bijection. The evaluation identities identify the resulting
action with the blockwise action on lower words, and
Lemma~\ref{lem:continuation} gives the continued regular affine maps
with all their composition identities.

For a seed, $e\geq2$ gives at least two group orbits in its initial word
set; for every other birth, $|E|\geq2$ does so. In a refinement, the
balanced map $h$ is fixed by the
group action. An orbit set with at least two points admits at least
two balanced maps, so every later word set again has at least two
orbits. This completes the induction underlying the construction and
the point-probability bound used above.

The same coordinate coverage also excludes nonregular block maps,
without a further probabilistic condition. Put $\ell=L_{n+1}/L_n$.
For any finite table $f:W_{u,n}\to W_{v,n}$, write
$f^{\langle\ell\rangle}$ for its
application to each of $\ell$ consecutive lower blocks. Then
\begin{equation}\label{eq:exclude-nonregular}
 f^{\langle\ell\rangle}[W_{u,n+1}]\subseteq W_{v,n+1}
 \quad\Longrightarrow\quad f\in\mathcal A_n(u,v).
\end{equation}
Indeed, choose any source label $\xi$ and a target label $\eta$ naming
the image of its word. Equality of these words gives
$f(X_{u,\xi}(H_i))=X_{v,\eta}(H_i)$ at every sample position.
Condition 1 extends this equality to every $H\in\mathcal H_n$.
The rigidity assertion of Lemma~\ref{lem:cover} implies that $f$ is
regular affine.

\subsection{Subshifts and their factor maps}

Each pair of word sequences satisfies the hypotheses of Propositions~\ref{ext:aligned} and~\ref{ext:tame} on its common tail. Indeed, concatenation and the persistence of at least two words give (a)--(b) of the construction-sequence definition. Conditions 2 and 4 give ordered-pair coverage and unique readability, while conditions 3 and 5 give middle-third separation and the required two-block exclusion. The length conditions hold by construction.

For an index $u$, let $X_u=X_{W_u}\subseteq A_u^{\Z}$ be the construction subshift of Section~\ref{sec:prelim}, beginning at level $b(u)$. It has the continuous boundary map
\begin{equation}\label{eq:odometer}
 \rho_u:X_u\onto O,\qquad \rho_u(\sigma x)=\rho_u(x)+1,
\end{equation}
with the sign convention of \eqref{eq:rho-convention}.

For a table $f:W_{u,n}\to W_{v,n}$, use the notation $f^{[m]}$ and
$f^*$ from the preliminaries. Its block extension $f^{[m]}$ is initially
a map $W_{u,m}\to A_v^{L_m}$; when $f$ is regular affine, this block extension is exactly its continued table at level $m$.
Write $\Fact(X,Y)$ for the continuous onto maps from $X$ to $Y$
that commute with their shifts.

\begin{theorem}\label{thm:factors}
For all indices $u,v$,
\[
 \Fact(X_u,X_v)=
 \{\sigma^k p^*: k\in\Z,\ n\geq\max\{b(u),b(v)\},\quad
                    p\in\mathcal A_n(u,v)\}.
\]
The integer shift is unique, and the regular affine table is unique
after continuation to a common level. Its evaluated table determines
its associated finite-group epimorphism.
\end{theorem}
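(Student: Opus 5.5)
For the inclusion $\supseteq$, take $p\in\mathcal A_n(u,v)$ with $n\geq\max\{b(u),b(v)\}$. By Lemma~\ref{lem:continuation}, the block extension $p^{[m]}$ is the continued table at level $m$. The continued table is regular affine, hence onto $W_{v,m}$ (regular affine maps are onto by Definition~\ref{def:affine}). So $p^{[m]}[W_{u,m}]=W_{v,m}$ for all $m\geq n$, and Proposition~\ref{ext:aligned} shows that $p^*$ is an aligned factor. Composing with $\sigma^k$ keeps it in $\Fact(X_u,X_v)$.

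For the inclusion $\subseteq$, I will use the remark after the construction: both word sequences satisfy the hypotheses of Proposition~\ref{ext:tame} on a common tail. Given $F\in\Fact(X_u,X_v)$, Proposition~\ref{ext:tame} writes $F=\sigma^kf^*$ with $f:W_{u,n}\to W_{v,n}$ and $f^*$ aligned. We may raise $n$ above $\max\{b(u),b(v)\}$ by replacing $f$ with $f^{[n']}$; this does not change $f^*$, because $f^*$ only reads the finer decomposition. Proposition~\ref{ext:aligned} then gives $f^{[n+1]}[W_{u,n+1}]=W_{v,n+1}$. In the notation of \eqref{eq:exclude-nonregular} this is $f^{\langle\ell\rangle}[W_{u,n+1}]\subseteq W_{v,n+1}$. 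The implication \eqref{eq:exclude-nonregular} therefore yields $f\in\mathcal A_n(u,v)$.

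Uniqueness is where I expect the real care to be needed. Suppose $\sigma^kp^*=\sigma^{k'}p'^*$ with $p,p'$ aligned. Odometer offsets add under composition, so comparing offsets gives $k=k'$ in $O$. Since $\Z$ embeds in $O$, this gives $k=k'$ as integers. Then $p^*=p'^*$. Continue $p,p'$ to a common level $m$. Every $W_{u,m}$-word occurs in $X_u$ at a level-$m$ boundary, and aligned maps send boundaries to boundaries. Hence the two tables agree on every word, which is the uniqueness of the table after continuation. The final claim follows from Lemma~\ref{lem:affine}: the evaluated table determines the associated homomorphism via \eqref{eq:recover-q}. Lemma~\ref{lem:continuation} makes this homomorphism independent of the continuation level, so it is well defined for $F$.

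The main obstacle is the reduction from the level supplied by Proposition~\ref{ext:tame} to a level where \eqref{eq:exclude-nonregular} applies. That level might lie below some $b(u)$, or the table might be given on a level where alignment has not yet been checked. Both issues are handled by passing to block extensions, which leave $f^*$ unchanged.
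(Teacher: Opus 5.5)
Your proposal is correct and is essentially the paper's proof. You use Proposition~\ref{ext:tame}, Proposition~\ref{ext:aligned} at $m=n+1$ and \eqref{eq:exclude-nonregular} for $\subseteq$; surjectivity of the continued (label-lifted) table plus Proposition~\ref{ext:aligned} for $\supseteq$; and odometer offsets, boundary occurrences of source words and Lemma~\ref{lem:affine} for uniqueness. The level-raising step you flag as the main obstacle is harmless but unnecessary: the table supplied by Proposition~\ref{ext:tame} already lives at a common level where both $W_{u,n}$ and $W_{v,n}$ are defined, so $n\geq\max\{b(u),b(v)\}$ holds automatically.
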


\begin{proof}
For a factor $F:X_u\to X_v$, Proposition~\ref{ext:tame} gives $F=\sigma^k f^*$, where $f$ is
a table at a common level $n$ and $f^*$ is aligned. Proposition~\ref{ext:aligned} gives
\begin{equation}\label{eq:all-later-images}
 f^{[m]}[W_{u,m}]=W_{v,m}\qquad(m\geq n).
\end{equation}
Since $\ell=L_{n+1}/L_n$ and
$f^{[n+1]}=f^{\langle\ell\rangle}$, equation
\eqref{eq:all-later-images} at $m=n+1$ and implication
\eqref{eq:exclude-nonregular} show that $f\in\mathcal A_n(u,v)$.

Conversely, a map $f\in\mathcal A_n(u,v)$ has an onto label lift
with the evaluation identity. Its block extension therefore maps
$W_{u,n+1}$ onto $W_{v,n+1}$. Iterating gives
\eqref{eq:all-later-images} at every later level, so Proposition~\ref{ext:aligned} makes $f^*$ an aligned factor.

An aligned factor has zero odometer offset; a shift by $k$ adds
$(k\bmod L_n)_n$. Two representations of the same factor therefore
have the same integer shift, because $L_n\to\infty$. Continue their
remaining tables to a common level. Every source word occurs at a
boundary in some point of $X_u$, and the target decomposition is
unique. Equality of the aligned factors gives equality of the two
tables on every source word. Lemma~\ref{lem:affine} recovers their
associated group epimorphism.
\end{proof}

\section{Amalgamation and the Borel construction}\label{sec:amalgamation}

The word systems encode finite quotient maps. We now organize their
finite diagrams over $P_R$ and select a sequence that absorbs every
compatible finite extension. This removes the dependence on a particular quotient
presentation: an isomorphism of the profinite groups will induce a conjugacy of
the resulting inverse limits.

\subsection{Finite diagrams with quotient witnesses}

We now specify the finite codes for the cones used above. A \emph{condition} is a finite code
\begin{equation}\label{eq:condition-code}
 a=\bigl(n(a),r(a),(s_t)_{t<d(a)},(p_t)_{t<d(a)},D_a\bigr),
 \qquad p_t:W_{r(a),n(a)}\onto W_{s_t,n(a)}.
\end{equation}
The indices $r(a),s_t$ specify word systems active at level $n(a)$; we
call $r(a)$ the root and the $d(a)$ entries $s_t$ the slots. Each $p_t$ is regular
affine, and one slot is the root with its identity map. The finite tree
$D_a$ records the construction. A creation node records a seed or an
application of Lemma~\ref{lem:birth}, with the parent diagrams and
projection tables at its execution level. A continuation node records
an earlier condition, a later level, and the continued maps and equations
given by Lemma~\ref{lem:continuation}. At level $m\geq n(a)$ we use the
current graph
\begin{equation}\label{eq:condition-graph}
 \Omega_a(m)=
 \left\{\bigl(p_t^{\uparrow m}(w)\bigr)_{t<d(a)}:
                 w\in W_{r(a),m}\right\},
\end{equation}
where $p_t^{\uparrow m}$ denotes continuation. The root coordinate
identifies this graph with $W_{r(a),m}$.

Requests involving these conditions are executed by the schedule of Section~\ref{sec:symbolic}. At execution, we continue all parent maps to the current level and form the specified product or pullback from the current graphs \eqref{eq:condition-graph}. The construction record includes this finite set and its projections; later nodes record their continuations. A creation node must agree with a request actually executed by the schedule, including its word-system index and projection tables. A continuation node must agree with the tables computed by the common refinement. Reconstructing the schedule up to the largest referenced level and checking these finite data decides validity.

A \emph{root witness} for a condition $a$ over $P_R=\Fhat/R$ is a pair $(k,\beta)$ with
$R\subseteq M_k$ and an isomorphism of finite groups
$\beta:\Fhat/M_k\to G_{r(a)}$, specified by its table. A condition equipped with this pair is
\emph{witnessed}; equality means equality of all coded fields. We fix
the trivial-group seed as a starting object.

For witnessed conditions $a,b$ with root witnesses
$(k_a,\beta_a)$ and $(k_b,\beta_b)$, an arrow $e:a\to b$ is a regular
affine table
\[
 p_e:W_{r(b),n(b)}\onto W_{r(a),n(b)},\qquad n(a)\leq n(b),
\]
whose associated group epimorphism $q_e:G_{r(b)}\onto G_{r(a)}$ satisfies
\begin{equation}\label{eq:witness-compatibility}
 M_{k_b}\subseteq M_{k_a},\qquad
 \beta_a\overline\pi_{k_b,k_a}=q_e\beta_b.
\end{equation}
It induces the slot maps $p_t^{\uparrow n(b)}p_e$ of $a$. For
$e:a\to b$ and $f:b\to c$, define
\begin{equation}\label{eq:arrow-composition}
 p_{f\circ e}=p_e^{\uparrow n(c)}p_f.
\end{equation}
Lemmas~\ref{lem:affine} and~\ref{lem:continuation} give associativity,
identities, and preservation of \eqref{eq:witness-compatibility}. The arrow $e:a\to b$ represents the aligned factor $p_e^*:X_{r(b)}\to X_{r(a)}$. The evaluation identities for continuation give
\begin{equation}\label{eq:contravariant-factors}
 (p_{f\circ e})^*=p_e^*\circ p_f^*.
\end{equation} All category and
commuting-diagram tests are finite except $R\subseteq M_k$, which is
closed. They are therefore Borel uniformly in $R$.

Joint embedding means that any two objects admit arrows to a common
object. Amalgamation means that a span $a\to b_0,a\to b_1$ extends to
a commuting square. The root kernel of a witnessed condition is the
open normal subgroup $M_k/R$ represented by its witness. We will use
these properties both to compare finite diagrams and to make the
selected kernels cofinal among all open normal subgroups.

\begin{lemma}\label{lem:amalgamation}
The witnessed category has joint embedding and amalgamation. Given an
object and an open normal subgroup of $P_R$, there is an arrow to an
object whose root kernel is contained in that subgroup.
\end{lemma}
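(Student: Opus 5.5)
We build each required object by a product or fibre-product request, executed by the schedule of Section~\ref{sec:symbolic}, and equip it with a root witness obtained from a common finite quotient of $P_R$. The three assertions are handled in the same way. First we find an open normal subgroup $M_k\supseteq R$ that dominates all kernels involved. Then we form the corresponding cone graph over a compatible group isomorphic to $\Fhat/M_k$. Finally we read off the arrows from the projection tables of Lemma~\ref{lem:birth}.

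\emph{Amalgamation.} Let $e_i:a\to b_i$ be the given arrows, with witnesses $(k_{b_i},\beta_{b_i})$. Since $P_R$ is profinite, the intersection $M_{k_{b_0}}\cap M_{k_{b_1}}$ is an open normal subgroup containing $R$. It equals $M_k$ for some $k$, and $R\subseteq M_k$. Take $G_*=\Fhat/M_k$ with the epimorphisms $q_{b_i}=\beta_{b_i}\overline\pi_{k,k_{b_i}}$ and $q_a=\beta_a\overline\pi_{k,k_a}$. Condition \eqref{eq:witness-compatibility} for $e_i$ gives $q_a=q_{e_i}q_{b_i}$, so these data form a compatible group for the fibre product $\Gamma_{b_0}\times_{\Gamma_a}\Gamma_{b_1}$ over the continued maps $p_{e_i}$. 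Here the $\Gamma$'s are the current root graphs.

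This fibre-product request has some code, and fairness guarantees it is executed at some level $n$. Lemma~\ref{lem:birth} then gives a new word system $c$ with group $G_*$ and a root witness $(k,\id)$, after identifying $G_*$ with $\Fhat/M_k$ through its coded table. The projections to the two factors, composed with root coordinates, are regular affine tables $p_{f_i}:W_{c,n}\onto W_{r(b_i),n}$ whose associated homomorphisms are $q_{b_i}$. This verifies \eqref{eq:witness-compatibility} for the new arrows $f_i:b_i\to c$. The fibre-product equation, continued by Lemma~\ref{lem:continuation}, yields $p_{e_0}^{\uparrow n}p_{f_0}=p_{e_1}^{\uparrow n}p_{f_1}$, that is $f_0e_0=f_1e_1$ by \eqref{eq:arrow-composition}.

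\emph{Joint embedding.} The argument is the same using a product request instead. Alternatively, one can deduce it from amalgamation over the trivial-group seed, once we note that every object receives an arrow from it. That arrow exists because Lemma~\ref{lem:birth} and the product construction give constant-fibre maps to a one-orbit trivial-group set, and the trivial witness is compatible.

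\emph{Cofinality.} Let $a$ be an object and $U=M_j/R$ an open normal subgroup. Put $M_k=M_j\cap M_{k_a}$ and apply a one-factor product request to $\Gamma_a$ with compatible group $\Fhat/M_k\onto G_{r(a)}$ via $\beta_a\overline\pi_{k,k_a}$. The resulting root has witness kernel $M_k/R\subseteq U$, and the projection provides the arrow.

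The main obstacle is to make sure these requests are genuinely valid and executed. The compatible group must be presented with coded tables, the named parent levels must be reached, and continuation must preserve the equations until execution. This is where the fairness argument of Section~\ref{sec:symbolic} and Lemma~\ref{lem:continuation} are needed. A second point is that the new object's level exceeds $n(b_i)$, so the arrow tables must be taken at the execution level; this is consistent with \eqref{eq:arrow-composition}. Finally, the request cannot depend on $R$, since the schedule is $R$-independent. This holds because the request mentions only finite groups and epimorphisms, while $R$ enters only through the witness condition $R\subseteq M_k$.
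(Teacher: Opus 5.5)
Your proof is correct and follows the paper's route. Joint embedding comes from a product request, and amalgamation from a fibre-product request over the continued arrow tables. In both, the compatible group is $\Fhat/(M_{k_{b_0}}\cap M_{k_{b_1}})$ with the canonical quotient witness, and your check of the compatibility equations from \eqref{eq:witness-compatibility} is right.

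The only difference is in the last assertion. The paper jointly embeds $a$ with a witnessed seed for $\Fhat/M_j$. Your one-factor product with compatible group $\Fhat/(M_j\cap M_{k_a})$ does the same in one step, and it is equally valid.

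Drop the alternative route to joint embedding through the trivial-group seed; as stated it is false. There is no one-orbit word system, since every word set in the construction has at least two orbits. An arrow from the seed to $b$ would also require a constant-fibre surjection from the orbit set of $W_{r(b),n(b)}$ onto the seed's word set at level $n(b)$. Nothing in the construction provides one. For example, a seed with two orbits introduced at a level where the trivial seed already has more than two words admits no such map.
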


\begin{proof}
For objects with root kernels $M_{k_0}/R,M_{k_1}/R$, take the new root
group $\Fhat/(M_{k_0}\cap M_{k_1})$ and request a product construction.
At its execution level, apply Lemma~\ref{lem:birth} to the two current
graphs. The resulting projections satisfy
\eqref{eq:witness-compatibility} with the canonical quotient witness,
so they give a joint embedding.

For a span $e_i:a\to b_i$, retain two named copies. At the execution level $m$, write $\gamma_c^m:W_{r(c),m}\to\Omega_c(m)$ for the root-coordinate identification, for $c=a,b_0,b_1$. Define
\begin{equation}\label{eq:current-graph-maps}
 \Gamma_i=\gamma_a^m\,p_{e_i}^{\uparrow m}\,(\gamma_{b_i}^m)^{-1}:
 \Omega_{b_i}(m)\onto\Omega_a(m),\qquad i=0,1.
\end{equation}
These maps are well-defined and have constant positive fibres because $p_{e_i}^{\uparrow m}$ is regular affine and the root-coordinate identifications are bijections. Form their fibre product
\begin{equation}\label{eq:current-pullback}
 \Lambda=\{(z_0,z_1)\in\Omega_{b_0}(m)\times\Omega_{b_1}(m):
                          \Gamma_0(z_0)=\Gamma_1(z_1)\}.
\end{equation}
Both pullback projections consequently have constant fibres. The group
$\Fhat/(M_{k_{b_0}}\cap M_{k_{b_1}})$ maps compatibly to the two root
groups by \eqref{eq:witness-compatibility}. Lemma~\ref{lem:birth}
supplies a free root set and regular projections $v_i$ satisfying
\[
 p_{e_0}^{\uparrow m}v_0=p_{e_1}^{\uparrow m}v_1.
\]
The quotient witness makes these witnessed arrows. Their composites
agree, and continuation preserves this table equality. This also treats
unequal parallel arrows with $b_0=b_1$.

Finally, given root kernel $M_l/R$ and an open normal subgroup $M_k/R$,
joint embedding with a witnessed seed for $\Fhat/M_k$ gives a root
kernel $(M_l\cap M_k)/R$.
\end{proof}

A \emph{chain} consists of objects $a_i$ and specified arrows $e_i:a_i\to a_{i+1}$. Its internal arrow $e_i^j:a_i\to a_j$ is their composite for $i<j$, and $e_i^i=\id_{a_i}$. The chain is \emph{cofinal} if every object admits an arrow to some $a_i$. It has the \emph{extension property} if
\begin{equation}\label{eq:extension-property}
 \begin{split}
 e:a_i\to b,\quad N\in\N\quad\Longrightarrow\quad
 &\exists j>\max\{i,N\}\ \exists d:b\to a_j\\[-2pt]
 &\hspace{25mm}d\circ e=e_i^j.
 \end{split}
\end{equation}

\begin{lemma}\label{lem:fusion}
We can select Borelly a cofinal sequence of witnessed conditions with
the extension property, unbounded levels, and cofinal root kernels.
An isomorphism $P_R\cong_{\mathrm{top}}P_S$ induces commuting cofinal zigzags between
the selected sequences.
\end{lemma}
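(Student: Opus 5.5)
The plan is a Fraïssé-style bookkeeping construction, which I carry out Borelly in $R$. For the selection I will enumerate, once and for all, the countable set of all finite codes: conditions, witnesses $(k,\beta)$, and arrows. Every such code except the clause $R\subseteq M_k$ is decidable. I fix a bijection $\N\to\N\times\N$ so that each task $(i,s)$ is visited at infinitely many stages after $i$. I start with $a_0$, the trivial-group seed with its trivial witness, and build the chain stage by stage.

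At stage $j$, with $a_0,\dots,a_j$ and the internal arrows built, the decoded task $(i,s)$ with $i\le j$ is treated as follows.
\begin{enumerate}
\item If $s$ codes a valid witnessed arrow $e:a_i\to b$ over $P_R$, I amalgamate the span $(e,e_i^j)$ by Lemma~\ref{lem:amalgamation}.
\item If $s$ codes an object $b$, I use joint embedding of $a_j$ and $b$.
\item If $s$ codes an index $k$ with $R\subseteq M_k$, I use the cofinality clause.
\end{enumerate}
In each case I choose the least code giving the required object $a_{j+1}$ and arrows. I also compose with a continuation arrow to a strictly higher level, so that levels are unbounded. The amalgamating arrow $d':b\to a_{j+1}$ satisfies $d'\circ e=e_j^{j+1}\circ e_i^j=e_i^{j+1}$. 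Because each task recurs infinitely often, the target index can be made to exceed any prescribed $N$, and this yields \eqref{eq:extension-property}.

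Cofinality of the chain comes from the joint-embedding tasks, and cofinality of the root kernels comes from the third kind of task. Borelness holds because each step is a least-witness search among codes whose validity is a finite check plus the closed condition $R\subseteq M_k$. The search always succeeds by Lemma~\ref{lem:amalgamation}, so every finite initial segment of the chain depends Borelly on $R$.

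For the zigzag, I let $\theta:P_R\to P_S$ be a topological isomorphism. Since $\theta$ carries open normal subgroups to open normal subgroups, there is a transport of witnessed conditions. A condition $a$ with witness $(k,\beta)$ over $R$ has root kernel $M_k/R$; its image $\theta(M_k/R)$ equals $M_{k'}/S$ for some $k'$. The induced isomorphism $\bar\theta:\Fhat/M_k\to\Fhat/M_{k'}$ lets me give $a$ the witness $(k',\beta\bar\theta^{-1})$ over $S$. The underlying condition and all tables are unchanged, and \eqref{eq:witness-compatibility} is preserved because $\bar\theta$ commutes with the maps $\overline\pi$. This gives an isomorphism of the witnessed categories over $R$ and over $S$ that is the identity on tables.

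I will then run the standard back-and-forth. Let $(a_i)$ and $(b_i)$ be the selected sequences for $R$ and $S$. By cofinality of $(b_j)$, the transport of $a_0$ maps to some $b_{j_0}$. That gives an arrow $a_{0}\to b_{j_0}$ over $S$, which I transport back to $R$. By the extension property of $(a_i)$ it maps back to some $a_{i_1}$ compatibly with $e_0^{i_1}$. Alternating produces increasing indices and arrows $a_{i_t}\to b_{j_t}\to a_{i_{t+1}}$ whose composites are the internal arrows on both sides. This is the required commuting cofinal zigzag.

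The main obstacle I expect is bookkeeping rather than mathematics. The extension property must hold for arrows from $a_i$ into objects $b$ whose existence, in particular whose execution level in the schedule of Section~\ref{sec:symbolic}, may only be determined later. I also need that a transported arrow is literally a valid arrow on the other side. For the first point I plan to handle it by only treating a task once the level of $j$ exceeds every level referenced by $b$, using fairness of the schedule. The second point is automatic, since validity of conditions does not depend on $R$ apart from the witness.
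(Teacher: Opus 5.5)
Your proposal is correct and follows the paper's proof essentially step for step: a least-code Borel selection meeting joint-embedding, amalgamation and kernel-cofinality requirements via Lemma~\ref{lem:amalgamation} together with level continuations, then witness transport along $\theta$ that is the identity on tables, and an alternating use of cofinality and \eqref{eq:extension-property}. One slip is harmless. A bijection $\N\to\N\times\N$ visits each task only once, not infinitely often, but a single amalgamating arrow $d:b\to a_{j+1}$ already yields \eqref{eq:extension-property} for every $N$ after post-composing with the internal arrows $e_{j+1}^{j'}$. Your final worry about levels is moot, because the validity of a code is decided by reconstructing the schedule up to its referenced levels, independently of the chain.
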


\begin{proof}
Starting with the witnessed seed, dovetail the first three requirements
below with every prescribed lower bound on the future chain index; the
fourth requirement independently supplies unbounded word levels.
\begin{enumerate}
\item For every witnessed object $b$, find an arrow $b\to a_j$.
\item For every witnessed arrow $e:a_i\to b$, find $j>i$ and
      $d:b\to a_j$ whose composite with $e$ is the internal arrow
      $a_i\to a_j$.
\item For every $k$ with $R\subseteq M_k$, put a future root kernel
      inside $M_k/R$.
\item For every $N\in\N$, make a future level exceed $N$.
\end{enumerate}
The chain index $i$ and the word level $n(a_i)$ play different roles:
the former orders the selected diagrams, whereas the latter specifies
the finite tables representing them. Every tuple of codes and chain-index
lower bounds is revisited infinitely often, as is every requested word
level.
Invalid codes and references to chain stages not yet defined impose no
action at that visit. Before meeting a valid requirement, insert finitely
many identity arrows so that the current tail index exceeds its
prescribed lower bound.

Write $a_t$ for the current tail. Meet the first requirement by joint
embedding with $a_t$, and the
second by amalgamating $a_i\to b$ with the internal arrow from $a_i$
to the tail. The third follows from Lemma~\ref{lem:amalgamation}.
For the fourth, continue the current condition to
$m>\max\{N,n(a_t)\}$. Its construction record gains a continuation node;
Lemma~\ref{lem:continuation} preserves the regular surjections, their
associated group maps, and all recorded equations. The witness is
unchanged, and the identity on the continued root gives the required
arrow.

Choose the least code of a witnessing diagram at each step. Such codes
exist by these constructions and satisfy Borel predicates. For any code
$v$, being least means that $v$ witnesses the requirement and no smaller
code witnesses the same requirement. This is a Borel condition. Induction makes every selected object and
bonding table Borel. The repeated lower bounds give the extension
property \eqref{eq:extension-property}. The other requirements give
cofinality, cofinal root kernels, and unbounded levels.

Let $\theta:P_R\to P_S$ be a topological group isomorphism. For a witnessed
condition $a$ with root witness $(k,\beta)$, put $U=M_k/R$. Put
$U'=\theta[U]$, let $M_l$ be its inverse
image in $\Fhat$, and transport the witness by
\begin{equation}\label{eq:transported-witness}
 \beta':P_S/U'\longrightarrow G_{r(a)},\qquad
 \beta'(sU')=\beta\bigl(\theta^{-1}(s)U\bigr),
\end{equation}
using $P_R/U=\Fhat/M_k$. Since $M_l$ is the inverse image of $U'$ in
$\Fhat$, we also have $P_S/U'=\Fhat/M_l$; through this identification,
$(l,\beta')$ is a root witness for $a$ over $P_S$. This transport fixes
the finite groups, word systems, construction records, and tables, and preserves
\eqref{eq:witness-compatibility}. Transport by $\theta^{-1}$ is its
inverse, so the witnessed categories are isomorphic.

In the common category, let $(a_i),(b_j)$ be the two selected chains.
Cofinality gives $a_{i_0}\to b_{j_0}$. Apply
\eqref{eq:extension-property} alternately to obtain
\[
 a_{i_0}\longrightarrow b_{j_0}\longrightarrow a_{i_1}
 \longrightarrow b_{j_1}\longrightarrow\cdots,
\]
requiring $i_t,j_t\geq t$ and each two-step composite to equal the
internal arrow. An arrow $a_{i_t}\to b_{j_t}$ gives, contravariantly,
the $i_t$-coordinate of a map from the $b$-limit to the $a$-limit; an
arrow $b_{j_t}\to a_{i_{t+1}}$ gives the corresponding coordinate in
the other direction. The two-step identities make these coordinates
compatible, and cofinality determines all remaining coordinates. The
resulting maps are inverse. Every factor commutes with the shift, so the
resulting homeomorphism is a conjugacy.
\end{proof}

\subsection{A fixed Cantor space}

For $R\in\mathcal N$, let $(a_i)_{i\in\N}$ and
$(e_i:a_i\to a_{i+1})_{i\in\N}$ be the Borel chain and its specified
arrows selected in Lemma~\ref{lem:fusion}. Write $r_i=r(a_i)$ and let
$\pi_i^{i+1}=(p_{e_i})^*:X_{r_{i+1}}\onto X_{r_i}$ be the bonding factors represented by the selected chain arrows.
Set
\begin{equation}\label{eq:dynamical-inverse-limit}
 Y_R=\varprojlim_i(X_{r_i},\pi_i^{i+1}),\qquad
 S_R((x_i)_i)=(\sigma x_i)_i.
\end{equation}
The bonding maps are onto, so $Y_R$ is nonempty and every coordinate
projection is onto. A nonempty basic open set in $Y_R$ pulls back from one sufficiently
late stage. Minimality there gives a finite covering by iterates, and
pullback gives the same covering in $Y_R$. Thus $S_R$ is minimal.
The aligned bonding maps preserve the common odometer coordinate. They
therefore define $\rho_R:Y_R\to O$ by
$\rho_R((x_i)_i)=\rho_{r_i}(x_i)$ for any $i\in\N$; compatibility makes
this value independent of $i$.
Fix a point of $O$. At each stage its fibre is nonempty and compact.
An aligned onto bonding factor restricts
to an onto map between these fibres: every preimage has the same
odometer coordinate. Their inverse limit is nonempty, proving that
$\rho_R$ is onto and hence that $Y_R$ is infinite. An isolated point would, by minimality
and compactness, have finitely many translates covering $Y_R$.
Consequently $Y_R$ is perfect. It is also compact, metrizable, and
zero-dimensional, so it is a Cantor space.

To code these spaces uniformly, fix natural-number symbol codes and a
pairing function. Define
\begin{equation}\label{eq:ambient-embedding}
 \begin{split}
 E_R:Y_R&\longrightarrow(2^{\N})^{\Z},\\
 E_R((x_i)_i)(t)(\langle i,e\rangle)=1
 &\quad\Longleftrightarrow\quad x_i(t)\text{ has code }e,
 \qquad t\in\Z,\quad i,e\in\N.
 \end{split}
\end{equation}
This is a homeomorphism onto its compact image $A_R$ and intertwines
$S_R$ with the ambient shift $\widehat\sigma$.

A basic ambient cylinder constrains finitely many rows and coordinates;
its zero-bit requirements are finite Boolean combinations of row-symbol
conditions.
Pull its conditions to one later selected row through the bonding
tables. For a level-$n$ table, the block containing coordinate zero has
its left endpoint in $[-L_n+1,0]$. Testing these positions in the window
$[-L_n+1,L_n-1]$ identifies it uniquely by readability and determines
the target symbol. Compositions therefore give computable finite local
rules, so the pulled-back condition is finite and clopen. A pattern occurs in
the row subshift exactly when it occurs in two concatenated construction
words at some later level: ordered-pair coverage preserves the pattern
and supplies arbitrarily long extensions. Nonemptiness is therefore a
countable union of finite word tests. Surjectivity of the bonding maps
lifts a satisfying row to $Y_R$. Thus
$\{R:A_R\cap U\neq\varnothing\}$ is Borel for every basic cylinder $U$.
These hit predicates generate the hyperspace Borel structure, proving
that $R\mapsto A_R$ is Borel.

We construct a uniform homeomorphism $\Phi_A:A\to\Cantor$ for each
nonempty perfect compact subset $A$ of the ambient space. Fix a compatible
metric and enumerate the entire countable clopen algebra. Recursively
choose $0=k_0(A)<k_1(A)<\cdots$ and relative clopen partitions
$(P_s(A))_{s\in2^{k_n}}$ refining one another, with mesh (the maximum
cell diameter) below $2^{-n}$
for $n\geq1$, starting with $P_{\varnothing}(A)=A$. At each step choose
the least finite array that partitions
every current cell into a common number $2^r$ of nonempty pieces of the
required mesh, first minimizing $r\geq1$, and set $k_{n+1}=k_n+r$.
Index the terminal pieces by the binary extensions of length $r$ and
define the intervening binary cells by the corresponding unions.
Existence follows by taking
fine clopen partitions and repeatedly splitting their perfect cells to
reach a common power of two. Then
$P_s=P_{s0}\mathbin{\dot\cup}P_{s1}$ at every binary level.

Nonemptiness, relative coverage, disjointness, refinement, and diameter
bounds are Borel tests, so the recursion is Borel. Define
\begin{equation}\label{eq:cantor-trivialization}
 \Phi_A(x)=z\quad\Longleftrightarrow\quad
 x\in P_{z\upharpoonright m}(A)\text{ for every }m.
\end{equation}
Shrinking mesh and nested compactness give bijectivity, and the cells
are precisely the inverse images of binary cylinders. Thus $\Phi_A$
is a homeomorphism. The joint map is Borel by these membership tests;
its inverse is Borel by approximating the unique point in the nested
cells with the first point of a fixed dense sequence in the ambient
space within $2^{-n}$
of the level-$k_n$ cell.
The admissibility of each approximate point is a hyperspace hit test
for an open ball. Hence these choices are Borel, and their distances
to the unique point tend to zero with the mesh.

Set
\begin{equation}\label{eq:fixed-cantor-map}
 T_R=\Phi_{A_R}E_RS_RE_R^{-1}\Phi_{A_R}^{-1}
     =\Phi_{A_R}(\widehat\sigma\!\upharpoonright A_R)\Phi_{A_R}^{-1}.
\end{equation}
Joint evaluation is Borel and every section is continuous. Fix a compatible metric $d$ on $C$, and for continuous maps $f,g:C\to C$ set
\begin{equation}\label{eq:uniform-metric}
 d_\infty(f,g)=\sup_{z\in C}d(f(z),g(z)).
\end{equation}
For $\epsilon>0$, a dense sequence $(z_j)$ in $\Cantor$, and a continuous
map $g:\Cantor\to\Cantor$,
\[
 \{R:d_\infty(T_R,g)<\epsilon\}
 =\bigcup_{\substack{q\in\mathbb Q\cap[0,\infty)\\q<\epsilon}}
   \bigcap_j\{R:d(T_R(z_j),g(z_j))\leq q\}.
\]
The same formula applies to the inverses. Hence $R\mapsto T_R$ is
Borel into $\Homeo(\Cantor)$, with range in $\Min(\Cantor)$.

\section{Recovering the profinite group}\label{sec:recovery}

Lemma~\ref{lem:fusion} and \eqref{eq:fixed-cantor-map} show that
$P_R\cong_{\mathrm{top}}P_S$ implies $T_R\cong T_S$. We prove the converse by
extracting finite quotient homomorphisms from a conjugacy. The first
step uses the finite alphabet of each coordinate subshift.

\begin{lemma}\label{lem:finite-stage}
Let $X=\varprojlim_i X_i$ be an inverse limit of compact zero-dimensional
systems with homeomorphisms and onto bonding maps. Every continuous
equivariant map from $X$ to a finite-alphabet subshift $Z$ factors
equivariantly through one $X_i$. Surjectivity is preserved.
\end{lemma}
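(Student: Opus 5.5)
The plan is to reduce everything to the zeroth coordinate of the target subshift. Let $F:X\to Z$ be continuous and equivariant, with $Z\subseteq B^{\Z}$ for a finite alphabet $B$. Write $\sigma_X$ for the diagonal homeomorphism of $X$ and $\pi_i:X\to X_i$ for the coordinate projections. The map $x\mapsto F(x)(0)$ is a continuous function into the finite discrete set $B$. Each fibre is therefore clopen in $X$, so it is a finite union of basic open sets. In an inverse limit with onto bonding maps, the sets $\pi_i^{-1}[U]$ with $U\subseteq X_i$ open form a base, and they increase with $i$. By compactness, each of the finitely many clopen fibres is a finite union of such sets coming from one common stage $i$.

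Next I make the sets at stage $i$ clopen. The fibre $V_b=\{x:F(x)(0)=b\}$ equals $\pi_i^{-1}[U_b]$ for an open $U_b\subseteq X_i$. Since $\pi_i$ is onto, the sets $U_b=\pi_i[V_b]$ are compact, pairwise disjoint and cover $X_i$. Hence they form a clopen partition of $X_i$. Define $\phi:X_i\to B$ by $\phi(y)=b$ for $y\in U_b$; this map is continuous and satisfies $F(x)(0)=\phi(\pi_i x)$. Then put $G(y)(t)=\phi(T_i^t y)$, where $T_i$ is the homeomorphism of $X_i$. The map $G:X_i\to B^{\Z}$ is continuous and equivariant, and
\[
F(x)(t)=F(\sigma_X^t x)(0)=\phi(\pi_i\sigma_X^t x)=\phi(T_i^t\pi_i x)=G(\pi_i x)(t),
\]
so $F=G\circ\pi_i$.

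The remaining step is surjectivity. Because $\pi_i$ is onto, $G[X_i]=G[\pi_i[X]]=F[X]$. Therefore $G$ takes values in $Z$, and $G$ is onto $Z$ whenever $F$ is. Zero-dimensionality of the $X_i$ is not really needed, since the clopen partition comes out of the argument directly. The main point requiring care is the first step: I must check that the preimages of open subsets of the $X_i$ form a base that is increasing in $i$, and that compactness lets a single stage serve for all fibres at once. The paper's product-topology description of $Y_R$ and its nested compactness give exactly this.
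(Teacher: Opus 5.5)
Your proposal is correct and follows the paper's proof essentially step for step. You read off the zero-coordinate clopen partition and use compactness to push it to a single stage $i$. Surjectivity of $\pi_i$ then descends it to a clopen partition of $X_i$, you define $G(y)(t)=\phi(T_i^ty)$, and you get $G[X_i]=F[X]$ from surjectivity again.

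The one difference is minor. You take the sets $U\subseteq X_i$ open rather than clopen and deduce clopenness from compactness, Hausdorffness, disjointness and covering. This correctly shows that zero-dimensionality of the $X_i$ is not needed.
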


\begin{proof}
Let $A_Z$ be the alphabet of $Z$ and $\pi_i:X\onto X_i$ the projections.
The zero-coordinate symbol of $F:X\to Z$ gives a finite clopen
partition of $X$. Compactness expresses its members as finite unions
of sets $\pi_i^{-1}(U)$ with $U\subseteq X_i$ clopen. Pass to one index
above all those used. Since $\pi_i$ is onto, these sets descend to a
clopen partition of $X_i$, defining a continuous rule $c_i:X_i\to A_Z$.
If $T_i$ is the homeomorphism on $X_i$, put
\[
 F_i(u)(n)=c_i(T_i^nu),\qquad n\in\Z.
\]
This map is continuous and equivariant, and $F=F_i\pi_i$.
Surjectivity of $\pi_i$ gives $F_i[X_i]=F[X]\subseteq Z$, proving
both assertions.
\end{proof}

Suppose that $g:C\to C$ conjugates $T_R$ to $T_S$, and set
\[
 h=E_S^{-1}\Phi_{A_S}^{-1}g\Phi_{A_R}E_R:Y_R\longrightarrow Y_S.
\]
Equation~\eqref{eq:fixed-cantor-map} shows that $h$ conjugates $S_R$ to
$S_S$. Write $r_i,s_j$ for the root
indices and $G_i,H_j$ for their groups. Apply
Lemma~\ref{lem:finite-stage} alternately to coordinates of $h$ and
$h^{-1}$: factor the $j_0$-coordinate of $h$ through $i_0$, then the
$i_0$-coordinate of $h^{-1}$ through $j_1>j_0$, then the
$j_1$-coordinate of $h$ through $i_1>i_0$, and continue. Require each
new index to pass the next unused stage. This gives cofinal subsequences
and onto factors
\[
 A_n:X_{r_{i_n}}\to X_{s_{j_n}},\qquad
 B_n:X_{s_{j_{n+1}}}\to X_{r_{i_n}}.
\]
These maps are onto because the corresponding coordinates of $h$ and
$h^{-1}$ are onto and Lemma~\ref{lem:finite-stage} preserves surjectivity.
Writing $\pi^R_{i',i},\pi^S_{j',j}$ for internal bonding factors, we have
\begin{equation}\label{eq:dynamical-triangles}
 B_nA_{n+1}=\pi^R_{i_{n+1},i_n},\qquad
 A_nB_n=\pi^S_{j_{n+1},j_n}.
\end{equation}
Both identities follow from $h^{-1}h=\id$ and $hh^{-1}=\id$ after
composition with the onto inverse-limit projections.

By Theorem~\ref{thm:factors}, write
$A_n=\sigma^{c_n}\widetilde A_n$ and
$B_n=\sigma^{d_n}\widetilde B_n$, with
$\widetilde A_n,\widetilde B_n$ aligned and represented by regular affine
tables. The internal factors
are aligned, so their odometer offsets give
\[
 c_n+d_n=0,\qquad d_n+c_{n+1}=0.
\]
These are integer equalities because $L_m\to\infty$ makes the diagonal
map $\Z\to O$ injective. Hence $c_n=c$, $d_n=-c$, and shift
equivariance reduces \eqref{eq:dynamical-triangles} to
\[
 \widetilde B_n\widetilde A_{n+1}=\pi^R_{i_{n+1},i_n},\qquad
 \widetilde A_n\widetilde B_n=\pi^S_{j_{n+1},j_n}.
\]
At a common word level, these are equalities of evaluated tables by
Theorem~\ref{thm:factors}. Lemma~\ref{lem:affine} gives the corresponding
equalities of associated group homomorphisms. Denote those of
$\widetilde A_n,\widetilde B_n$
by $\alpha_n:G_{i_n}\onto H_{j_n}$ and
$\beta_n:H_{j_{n+1}}\onto G_{i_n}$, respectively. If $q^R,q^S$ are
the witnessed group bonding maps, then
\begin{equation}\label{eq:promaps}
 \beta_n\alpha_{n+1}=q^R_{i_{n+1},i_n},\qquad
 \alpha_n\beta_n=q^S_{j_{n+1},j_n}.
\end{equation}
In particular,
\[
 q^S_{j_{n+1},j_n}\alpha_{n+1}
 =\alpha_n\beta_n\alpha_{n+1}
 =\alpha_nq^R_{i_{n+1},i_n}.
\]
Thus a compatible family $(x_i)\in\varprojlim_iG_i$ determines a unique
point of $\varprojlim_jH_j$ with $j_n$-coordinate
$\alpha_n(x_{i_n})$. This is a continuous group homomorphism.
For the reverse map, given $(y_j)_j\in\varprojlim_jH_j$, prescribe its
$i_n$-coordinate to be $\beta_n(y_{j_{n+1}})$. Compatibility follows from
\[
 q^R_{i_{n+1},i_n}\beta_{n+1}
 =\beta_n\alpha_{n+1}\beta_{n+1}
 =\beta_nq^S_{j_{n+2},j_{n+1}}.
\]
Both maps are therefore well defined; their coordinates on cofinal
subsequences determine all remaining coordinates uniquely.
Equation~\eqref{eq:promaps} shows that they are inverse.

The witnesses identify these limits with $P_R$ and $P_S$. For each selected
condition $a_i$, write $(\kappa_i,\gamma_i)$ for its root witness and set
$N_i=M_{\kappa_i}/R$. The sequence $(N_i)$ is decreasing and cofinal among
the open normal subgroups of $P_R$. The map
\[
 \eta_R:P_R\longrightarrow\varprojlim_iP_R/N_i,
 \qquad x\longmapsto(xN_i)_i
\]
is continuous and injective, since cofinality among the open normal subgroups gives $\bigcap_iN_i=\{1\}$.
A compatible family of cosets is a decreasing
family of nonempty compact sets; its intersection is a singleton by the
same identity. Thus $\eta_R$ is a continuous bijection from a compact
space to a Hausdorff space, hence a homeomorphism. The witnesses $\gamma_i$
identify $P_R/N_i$ with $G_i$, and \eqref{eq:witness-compatibility}
identifies the quotient maps with the group bonding maps. Applying the same argument to $P_S$ proves
$P_R\cong_{\mathrm{top}}P_S$.

Inclusion of $\Min(C)$ into $\Homeo(C)$ reduces minimal conjugacy to
conjugacy of arbitrary Cantor homeomorphisms. Proposition~\ref{prop:countable-universality}
therefore gives the upper bound $\ES$. Conversely, composing the Borel
map of Proposition~\ref{prop:profinite-source} with $R\mapsto T_R$ gives
the lower bound by \eqref{eq:main}. This proves Theorem~\ref{thm:main}.

\section{Limitations}

Combining the reduction with restrictions on invariant measures or word growth requires refinements that retain the quotient maps while controlling word frequencies or word counts. The additional conditions must persist through amalgamation and passage to inverse limits.

\section*{Declaration and Statements}

We leveraged our developed Agent TARS to support this research. GPT‑5.6 Sol assisted with manuscript organization and language polishing. All details were personally checked and validated by Xinan Dai, who also built the core narrative of the article. No generative‑AI tool is listed as an author; all intellectual content has been reviewed and confirmed by human authors.

\end{document}